\newtheorem{thm}{Theorem}[section]
\newtheorem{lem}[thm]{Lemma}
\newtheorem{prop}[thm]{Proposition}
\newtheorem{question}[thm]{Question}
\newtheorem{conj}{Conjecture}
\newtheorem{rmk}[thm]{Remark}
\theoremstyle{definition}
\newcommand{\SL}{\mathrm{SL}(n,\mathbb{R})}
\newcommand{\SLZ}{\mathrm{SL}(n,\mathbb{Z})}
\newcommand{\PSL}{\mathrm{PSL}(n,\mathbb{R})}
\newcommand{\GL}{\mathrm{GL}(n-1,\mathbb{R})}
\newcommand{\Sone}{\mathbb{S}^1}
\newcommand{\Sn}{\mathbb{S}^n}
\newcommand{\Snone}{\mathbb{S}^{n-1}}
\newcommand{\SO}{\mathrm{SO}(n)}
\newcommand{\SOone}{\mathrm{SO}(n-1)}
\newcommand{\cO}{\mathcal{O}}
\newcommand{\cG}{\mathfrak{g}}
\newcommand{\SU}{\mathrm{SU}(n)}
\newcommand{\RP}{\mathbb{RP}^{n-1}}
\newcommand{\R}{\mathbb{R}}
\newcommand{\Z}{\mathbb{Z}}
\newcommand{\C}{\mathbb{C}}
\newcommand{\RO}{\R^n\backslash\{0\}}
\newcommand{\X}{\mathfrak{X}(\Sigma)}
\newcommand{\Gr}{\mathrm{Gr}(2,n)}
\begin{document}

\title{REAL ANALYTIC $\SL$-ACTIONS ON CLOSED MANIFOLDS}

\author{Miri Son}
\address{Department of Mathematics, Rice University, Houston, TX 77005}

\email{ms235@rice.edu}

\begin{abstract}
We classify real-analytic $\SL$-actions on closed manifolds of dimension $m$ for $3\leq n\leq m\leq2n-3$, which extends Fisher--Melnick's work for $\SL$-actions on closed $n$-manifolds. Additionally, we classify smooth $\SL$-actions on closed $m$-manifolds that are fixed-point free. As a corollary, we obtain the density or non-density of structural stability of fixed-point free $\SL$-actions.  
\end{abstract}

\maketitle

\section{Introduction}
In this paper, we classify real-analytic $\SL$-actions on closed manifolds of dimension $m$, where $3\leq n\leq m\leq2n-3$. The manifolds that arise in this classification are constructed as suspensions or suitable modifications based on all possible homogeneous spaces. We impose the upper bound $m\leq2n-3$ to constrain the variety of possibilities that arise when $m>2n-3$. A more detailed discussion of it is provided in Section \hyperref[s3.1]{3.1}, especially in Remark \hyperref[rmk3.3.]{3.3}. 

The $\SL$-actions considered in this classification fall into two distinct types, distinguished by the existence of global fixed points. A global fixed point means a fixed point that is fixed by the $\SL$-action on the manifold. 

Let $Q\leq\SL$ denote the stabilizer of a line, and $Q_2\leq\SL$ denote the stabilizer of a 2-plane in the standard representation of $\R^n$. Both $Q$ and $Q_2$ are maximal parabolic subgroups of $\SL$. In the first type of actions, those without global fixed points, the $\SL$-action is induced by the actions of either $Q$ or $Q_2$ on a certain submanifold $\Sigma$ of a manifold. These subgroup actions will factor through one-dimensional $\R^*\cong\R\backslash\{0\}$ actions by flows on $\Sigma$. 

\begin{restatable}{thm}{thmA} \label{thmA}
    Suppose that $G\cong\SL$ acts real-analytically on a closed manifold $M$ of dimension $m$, where $3\leq n\leq m\leq2n-3$. Suppose the action is nontrivial and admits no global fixed points. Then the $G$-action on $M$ is real-analytically and $G$-equivariantly isomorphic to one of the following:
    \begin{enumerate}
        \item The action on manifold is induced by an action of a maximal parabolic subgroup $Q$ on a submanifold $\Sigma$, as described in Section \hyperref[s4.1.1]{4.1.1}, so that 
        $$M=G\times_Q\Sigma$$ which is real-analyitically diffeomorphic to a fiber bundle over either $\Snone$ or $\RP$.
        \item The action on manifold is induced by an action of a maximal parabolic subgroup $Q_2$ on a circle $\Sigma_2$, as described in Section \hyperref[s4.1.2]{4.1.2}, so that $$M=G\times_{Q_2}\Sigma_2$$ which is real-analytically diffeomorphic to a circle bundle over $\mathrm{Gr}(2,n)$.
        \item The manifold $M$ is a homogeneous space with a transitive $\SL$-action, as classified in Theorem \hyperref[thm3.1.]{3.1}.
    \end{enumerate}
    The same classification applies to smooth actions as well.
\end{restatable}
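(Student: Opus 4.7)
My plan is to locate a minimal closed $G$-orbit, identify its isotropy via the classification of parabolic subgroups of $\SL$, and then use an equivariant analytic slice together with real-analyticity to extend the local description globally. Since $M$ is compact and the action is real-analytic, orbit closures are real-analytic subsets of well-defined dimension, so strictly descending chains of closed $G$-invariant subsets terminate, and Zorn's lemma yields a minimal nonempty closed $G$-invariant subset $O = G/H$. Compactness of $O$, combined with $G$ being noncompact and simple, forces $H$ to be parabolic (by a standard result on closed cocompact subgroups of semisimple Lie groups), while the no-fixed-point hypothesis rules out $H = G$. Among the parabolic quotients of $\SL$, the smallest-dimensional ones are $G/Q \cong \Snone$ or $\RP$ of dimension $n-1$ and $G/Q_2 \cong \Gr$ of dimension $2n-4$; all others exceed dimension $2n-3$. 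Hence $\dim O \leq m \leq 2n-3$ leaves three possibilities: $H = Q$, $H = Q_2$, or $\dim O = m$, the last giving case (3) via Theorem \hyperref[thm3.1.]{3.1}.

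For the remaining two cases, I would apply an equivariant analytic tubular neighborhood theorem at a point of the closed orbit $O = G/P$ to describe a $G$-invariant open neighborhood as $G \times_P U$, with $U$ a $P$-invariant analytic open set of dimension $m - \dim(G/P)$. In the $Q_2$ case, $\dim U = 1$, and a compactness-plus-connectedness argument then forces $M$ to be a closed circle bundle $M = G \times_{Q_2} \Sigma_2$ over $\Gr$ with $\Sigma_2 \cong S^1$, yielding case (2). In the $Q$ case, $\dim U = m - n + 1 \leq n - 2$, and one needs to show that this local bundle structure propagates to all of $M = G \times_Q \Sigma$ for a closed analytic $Q$-manifold $\Sigma$. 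Real-analyticity is essential here, both to prevent orbit degeneration along the boundary of the local tube and to patch together slices into a single global bundle.

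To finish, write $P = L \ltimes N$ for the Levi decomposition. Then $L$ contains a simple factor---$\SL(n-1,\R)$ for $Q$ or $\SL(n-2,\R) \times \SL(2,\R)$ for $Q_2$---that cannot act faithfully and analytically on $\Sigma$ for dimension reasons: the minimal-dimensional nontrivial analytic action of $\mathrm{SL}(k,\R)$ on a manifold requires dimension at least $k-1$, a Fisher--Melnick-style rigidity statement. Hence the semisimple part of $L$ acts trivially on $\Sigma$; a further argument using normality of the unipotent radical $N$ in $P$ shows $N$ acts trivially too, leaving only the $\R^*$-center of $L$ acting by a flow as claimed. The main obstacle I anticipate is the global extension in the $Q$ case: propagating the local $G \times_Q U$ structure to all of $M$ requires a global orbit analysis that leans heavily on both real-analyticity and the dimension bound $m \leq 2n-3$. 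The smooth case should then follow by essentially the same arguments.
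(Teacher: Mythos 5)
There is a genuine gap at the very first step. You claim that a minimal nonempty closed $G$-invariant subset is a single compact orbit $G/H$ and that compactness of $G/H$ forces $H$ to be parabolic. Neither half is safe, and the second half is false in exactly the situations this theorem must cover: the orbit type $(\R^n\backslash\{0\})/\Lambda\cong\Snone\times\Sone$ (case (3) of Theorem \hyperref[thm3.1.]{3.1}) is compact of dimension $n\leq m$, and its isotropy is the preimage in $Q$ of the lattice $\Lambda<\R^*$ under $\nu$ --- a closed cocompact subgroup of $\SL$ that is not parabolic (cocompact lattices are another counterexample to the general claim). Worse, there are fixed-point-free actions in which \emph{every} orbit is of this type and no orbit has parabolic isotropy at all: take $M=G\times_Q\Sone$ with $Q$ acting through a nonvanishing flow on the circle, as in Construction 1-(1). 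Your trichotomy ``$H=Q$, $H=Q_2$, or $\dim O=m$'' therefore misses the orbits that actually occur generically, and the subsequent slice argument is anchored to an orbit that need not exist. The paper avoids this entirely by working not with a distinguished orbit but with the globally defined set $\Sigma=\mathrm{Fix}(C^0)$, $C^0\cong\SOone$ (Proposition \hyperref[prop5.1.]{5.1}), which meets every orbit, is automatically a closed $(m-n+1)$-submanifold, and is shown to be $Q$-invariant; the diffeomorphism $G\times_Q\Sigma\to M$ is then proved directly by transversality plus the Iwasawa decomposition (Lemma \hyperref[lem5.3.]{5.3}). This also disposes of the ``global propagation'' obstacle you flag but do not resolve: there is nothing to propagate once $\Sigma$ is defined globally from the start.

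Two further points. First, your claim that all parabolic quotients other than $G/Q$ and $G/Q_2$ exceed dimension $2n-3$ overlooks the partial flag variety $\mathcal{F}^n_{1,2}$ of dimension exactly $2n-3$; this happens to land in your case $\dim O=m$, but it signals that the dimension bookkeeping needs the full orbit classification of Theorem \hyperref[thm3.1.]{3.1} rather than an ad hoc list. Second, your rigidity argument for killing the semisimple part of the Levi factor fails at the boundary: $\dim\Sigma=m-n+1$ can equal $n-2$ when $m=2n-3$, which is precisely the minimal dimension $(n-1)-1$ on which $\mathrm{SL}(n-1,\R)$ does act nontrivially (on $\mathbb{RP}^{n-2}$), so the dimension count alone does not force $E^0$ to act trivially on $\Sigma$ in that case. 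The paper instead reads off the possible point stabilizers along $\Sigma$ (namely $E^0$, $E$, $Q^0$, $Q$) from the orbit classification, all of which contain $E^0$, so the $Q$-action on $\Sigma$ factors through $\nu:Q\to\R^*$ with no separate rigidity input needed.
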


Remark that the case (1) described in the preceding theorem accounts for all possible non-transitive $\SL$-actions on $m$-manifolds $M$ for $3<n\leq m<2n-3$. When the manifold has dimension $2n-3$, both cases (1) and (2) are possible.

\medskip

The second type of action occurs when the $\SL$-action admits fixed points on the manifold. In this case, the action on the manifold is constructed by inducing the action of the identity component $Q^0$ of $Q$ on a certain submanifold as in the previous case, and by attaching copies of $\R^m$ at the fixed points.  

\begin{restatable}{thm}{thmB} \label{thmB}
    Suppose that $G\cong\SL$ acts real-analytically on a closed $m$-manifold $M$, where $3\leq n\leq m\leq2n-3$. Assume that the $G$-action on $M$ is non-trivial and has at least one global fixed point. Then the manifold $M$ with $G$-action is, up to $G$-equivariant real-analytic isomorphism, constructed by gluing the following:
    \begin{enumerate}
        \item a suspension space $G\times_{Q^0}(\Sigma^0\backslash F)$, which is induced by an action of the identity component $Q^0$ of a maximal parabolic subgroup $Q$ on a certain submanifold $\Sigma^0$, except for the $G$-fixed points set $F\subset\Sigma^0$, and
        \item a neighborhood of $F$, which is $G$-equivariantly diffeomorphic to $\R^n\times F$, where $G$ acts in the standard way on $\R^n$ and trivially on $F$.
    \end{enumerate}
    A concrete construction is provided in Section \hyperref[s4.2]{4.2}.
\end{restatable}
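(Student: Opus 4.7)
The plan is to leverage the restrictive representation theory of $\SL$ to pin down the local model near a global fixed point, to linearize the action there, and then to combine this local description with the orbit-structure analysis underpinning Theorem \ref{thmA} on the complement of the fixed-point set.

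First I would analyze the isotropy representation at a point $p\in F$. The derivative $d\rho_p\colon G\to\mathrm{GL}(T_pM)$ is an $m$-dimensional real representation of $\SL$. Every non-trivial irreducible real representation of $\SL$ other than the standard $\R^n$ and its dual has dimension at least $\binom{n}{2}\geq 2n-2$ for $n\geq 4$, while for $n=3$ only $\R^3$ and $(\R^3)^*$ occur in dimension $\leq 3$. Since $m\leq 2n-3$ and two copies of the standard representation already exceed this bound, the only possibility is $T_pM\cong V\oplus W$ where $V$ is a single copy of the standard representation (up to duality) and $W$ carries the trivial action. Consequently $F$ is locally a smooth submanifold of codimension $n$.

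Next I would invoke a real-analytic linearization theorem for $\SL$-actions at a fixed point to produce a $G$-equivariant real-analytic diffeomorphism from a tubular neighborhood of $F$ in $M$ onto $\R^n\times F$, with $G$ acting by the standard representation on $\R^n$ and trivially on $F$; this delivers piece (2) of the decomposition. On the open invariant submanifold $M\setminus F$ the $G$-action has no global fixed points, and the orbit analysis underlying Theorem \ref{thmA} applies: orbits are open or of codimension one, and they are organized by the flow of a maximal parabolic. Because $F$ is accumulated by $\SL$-orbits that look like $\R^n\setminus\{0\}$ in the linear collar, the relevant parabolic here is the identity component $Q^0$ rather than $Q$. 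I would then identify $M\setminus F\cong G\times_{Q^0}(\Sigma^0\setminus F)$ for a $Q^0$-invariant submanifold $\Sigma^0\subset M$ containing $F$, and glue the suspension to the linear collar along the common punctured piece $(\R^n\setminus\{0\})\times F$.

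The main obstacle is producing this global suspension structure on the open manifold $M\setminus F$. Theorem \ref{thmA} is stated for closed manifolds and so cannot be quoted directly; one must either re-run its orbit-structure proof for an open manifold whose ends are prescribed by the linear model at $F$, or first build a $G$-equivariant compactification of $M\setminus F$ compatible with the tubular data and then apply Theorem \ref{thmA}. The representation-theoretic step likewise depends critically on the bound $m\leq 2n-3$, which is precisely what excludes the exterior-square and adjoint representations from the isotropy at a fixed point.
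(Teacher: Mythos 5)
Your proposal is correct and follows essentially the same route as the paper: the isotropy representation at a fixed point is forced to be $\rho\oplus\mathrm{id}$ (or its dual) by the bound $m\leq 2n-3$, the Guillemin--Sternberg/Kushnirenko analytic linearization gives the collar $\R^n\times F$, and the complement is organized as a $Q^0$-suspension by re-running the orbit-structure argument rather than quoting Theorem \ref{thmA} verbatim. The obstacle you flag is resolved in the paper exactly as you suggest: the cross-section $\Sigma^0$ is taken to be a component of the fixed-point set of $C^0\cong\SOone$, the suspension maps $\Phi_\beta$ on the components of $\Sigma^0\backslash F$ are shown to be local diffeomorphisms with open image, and these images are glued to the linearized neighborhood of $F$ along the punctured collar.
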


In the cases where the real-analytic $\SL$-action admits no global fixed points, the real-analytic action can be replaced by a smooth one resulting in the same classification. However, when the $\SL$-action on the manifold admits global fixed points, the proof in this paper does not apply to the smooth actions due to the absence of a smooth linearization, see the discussion in Section \hyperref[s2.3]{2.3} and Question \hyperref[question2.5.]{2.5}.

\subsection*{Density of Structural Stability}
The main theorem concerning smooth $\SL$-actions on manifolds without global fixed points leads to further remarks on the density of structural stability. A vector field is said to $structurally$ $stable$ if small perturbations in the $C^r$-topology do not change the topological structure of its solution curves. Since the $\SL$-actions on manifolds are determined by a smooth vector field $X$ on $\Sigma$ or $\Sigma_2$, the density of structurally stable vector fields on $\Sigma$ or $\Sigma_2$ implies the density of structurally stable $\SL$-actions on manifolds. We prove the following; see Section \hyperref[s6]{6} for a detailed discussion.

\begin{restatable}{cor}{corA}\label{cor1.3.}
        Suppose that $G\cong\SL$ acts smoothly on a closed $m$-dimensional manifold $M$, where $3\leq n\leq m\leq2n-3$. Assume the $G$-action does not admit global fixed points. Then the following hold:
    \begin{enumerate}
        \item Case (1) with $m=n$ or Case (2) of Theorem \hyperref[thmA]{1.1}, the space of structurally stable $C^r$-diffeomorphic $G$-actions on manifolds $M$ is dense for $r\geq1$.
        \item Case (1) with $m=n+1$ of Theorem \hyperref[thmA]{1.1}, then the space of structurally stable $C^r$-diffeomorphic $G$-actions on manifold $M$ is dense for $r=1$ and not dense for $r>1$.
        \item Case (1) with $m>n+1$ of Theorem \hyperref[thmA]{1.1}, then the space of structurally stable $C^r$-diffeomorphic $G$-actions on manifolds $M$ is not dense for $r\geq1$.
        \item Case (3) of Theorem \hyperref[thmA]{1.1}, then the actions cannot be deformed.
    \end{enumerate}
\end{restatable}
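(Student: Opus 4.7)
The plan is to reduce each part of the corollary to a classical density question for vector fields on the transverse submanifold $\Sigma$ (or $\Sigma_2$), and then invoke well-known theorems from smooth flow theory. By Theorem~\ref{thmA}, in Cases (1) and (2) the $G$-action on $M$ is completely determined by a smooth vector field $X$ on $\Sigma$ or $\Sigma_2$ generating the $\R^*$-flow through which the parabolic factor acts. This correspondence is a bijection up to equivalence: two suspension $G$-actions on $M$ are $G$-equivariantly topologically conjugate if and only if the associated flows are topologically conjugate, and any $C^r$-small $G$-equivariant perturbation of the action on $M$ restricts to a $C^r$-small perturbation of $X$. Hence structural stability of the $G$-action on $M$ is equivalent to structural stability of the flow of $X$ in the usual sense.

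Next I would identify the dimension of the transverse submanifold in each subcase. Since $G/Q$ has dimension $n-1$, in Case (1) one computes $\dim\Sigma = m-n+1$, which equals $1$ when $m=n$, equals $2$ when $m=n+1$, and is at least $3$ when $m>n+1$; in Case (2), $\Sigma_2$ is the circle. The four conclusions then follow from standard density theorems in flow theory:
\begin{enumerate}
    \item When $\dim\Sigma=1$ (Case (1) with $m=n$, or Case (2)), the structurally stable vector fields on a compact one-manifold form a $C^r$-open and $C^r$-dense subset for every $r\geq 1$; stability is equivalent to all zeros being hyperbolic, which is achieved by an elementary transversality perturbation.
    \item When $\dim\Sigma=2$ (Case (1) with $m=n+1$), Peixoto's theorem yields $C^1$-openness and $C^1$-density of Morse--Smale, hence structurally stable, fields on a closed surface; failure of $C^r$-density for $r>1$ follows from standard examples of smooth surface flows with a non-transverse saddle connection or a degenerate closed orbit whose $C^r$-neighborhood contains no Morse--Smale representative.
    \item When $\dim\Sigma\geq 3$ (Case (1) with $m>n+1$), Smale's classical construction exhibits $C^r$-open sets of non-structurally-stable vector fields (for instance flows containing an $\Omega$-exploding horseshoe) in every $C^r$ topology with $r\geq 1$, so density fails.
    \item In Case (3), $M\cong G/H$ is a fixed homogeneous space from the list of Theorem~3.1, and any $C^r$-nearby smooth $G$-action is necessarily transitive with an $\SL$-conjugate stabilizer, by an orbit-dimension argument and the local rigidity of closed subgroups of semisimple Lie groups; consequently the action cannot be nontrivially deformed.
\end{enumerate}

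The main obstacle I anticipate is the reduction step: to apply the density theorems I need every $C^r$-nearby smooth $G$-action on $M$ to decompose again as a suspension with the same fiber structure, so that perturbations of the action correspond exactly to perturbations of $X$. This should follow from applying Theorem~\ref{thmA} to the perturbed action (the classification is open in the sense that fixed-point-freeness and the orbit-type stratification persist under small perturbation), together with a $G$-equivariant tubular neighborhood argument showing that the transverse slice $\Sigma$ survives. The converse direction, lifting a perturbation of $X$ back to a $G$-equivariant perturbation on $M$, is immediate from the functoriality of the twisted product $G\times_Q(\cdot)$.
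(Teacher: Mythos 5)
Your overall reduction is the same as the paper's: by Theorem \hyperref[thmA]{1.1} the action is a suspension determined by a vector field on $\Sigma$ (of dimension $m-n+1$) or on the circle $\Sigma_2$, so density of structural stability is transferred to the transversal; parts (1), (3) and (4) then go through essentially as in the paper, which cites Peixoto for dimension $\leq 2$ and Smale--Williams for dimension $\geq 3$.

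The genuine gap is in your justification of the non-density claim in part (2). You propose to exhibit a surface flow with a non-transverse saddle connection or a degenerate closed orbit ``whose $C^r$-neighborhood contains no Morse--Smale representative.'' No such flow exists: saddle connections and degenerate closed orbits on a compact surface are destroyed by arbitrarily $C^\infty$-small perturbations --- this is precisely the mechanism of Peixoto's density proof, and Peixoto's theorem in fact gives $C^r$-density of structurally stable vector fields on compact orientable surfaces for every $r\geq 1$. So no argument phrased purely in terms of flows on $\Sigma$ can yield non-density for $r>1$ when $\dim\Sigma=2$. The paper's source of non-density is different in kind: the $Q$-action on $\Sigma$ factors through $\R^*\cong\Z_2\times\R$, so the relevant dynamical datum is not only the flow but a diffeomorphism (the involution composed with the time-one map), and the paper invokes the Newhouse phenomenon (Theorem \hyperref[thm6.3.]{6.3}) --- $C^r$-open sets, $r\geq 2$, of non-hyperbolic, hence non-structurally-stable, surface \emph{diffeomorphisms} --- to conclude non-density for $r>1$. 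Your part (2) needs to be rewritten around this diffeomorphism-level obstruction rather than around flow degeneracies. Separately, your reduction paragraph should acknowledge that what must be perturbed and controlled is the pair (vector field, commuting involution), not the vector field alone; this is exactly the point at which the flow picture and the diffeomorphism picture diverge in dimension two.
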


\subsection{Previous work on simple Lie group actions on manifolds} We briefly review previous classification results of $\SL$-actions on manifolds. 

In low-dimensional cases, Schneider provided a classification of real analytic actions of $\mathrm{SL}(2,\mathbb{R})$ on closed surfaces and $\R^2$ in \cite{S}. 

Uchida classified real-analytic actions of $\SL$ on the standard $n$-sphere $\Sn$ for $n\geq 3$ in \cite{U1}. This was later extended to the standard $m$-sphere $\mathbb{S}^m$ for $5\leq n\leq m<2n-1$ in \cite{U2}. A key aspect of these works is the use of local linearizability, which examines whether actions of semisimple Lie groups near fixed points are locally conjugate to their linear actions on the tangent space. The linearization theorem for real-analytic actions of semisimple Lie groups was established by Guillenmin--Sternberg \cite{GS} and Kushnirenko \cite{K}.

Building on Uchida's work, Fisher--Melnick achieved a complete classification of both real-analytic and smooth actions of $\SL$ on closed $n$-dimensional manifolds, as presented in \cite{FM}. Their extension to smooth actions was made possible by the result of Cairns--Ghys in \cite{CG}, which established smooth linearizability of $\SL$-action. Moreover, Cairns and Ghys classified all orbit types of $\SL$-actions on $n$-dimensional manifolds in the same paper, which made it possible for Fisher and Melnick to generalize the classification from $n$-spheres to arbitrary closed $n$-dimensional manifolds.

In this paper, we extend these classification results to real-analytic actions of $\SL$ on higher-dimensional manifolds, those of dimension greater than $n$. Two key ingredients in our approach are the linearization theorem for real-analytic $\SL$-actions in Section \hyperref[s2.3]{2.3} and a new orbit classification, presented in Section \hyperref[s3.1]{3.1}.

\subsection{Rigidity and Zimmer program}
The main motivation of this work comes from the Zimmer program: Actions of higher-rank semisimple Lie groups and their lattices on closed manifolds arise from algebraic constructions. If the manifold dimension is smaller than the rank of the group, the action is very restricted. This is the famous Zimmer's conjecture proved by Brown--Fisher--Hurtado. They showed that actions of lattices in $\SL$ on manifolds of dimension at most $n-2$ and volume-preserving actions on manifolds of dimension $n-1$ should factor through finite group actions, in \cite{BFH1, BFH2, BFH3}. Their result was extended to several other semisimple Lie groups, each with corresponding dimension bounds in \cite{BFH3}. 

Brown--Rodriguez-Hertz--Wang completed $\SL$-action classification without the volume-preserving assumption through dimension $n-1$. An action of a lattice in $\SL$ on an $(n-1)$-dimensional manifold either factors through a finite group or extends to an $\SL$-action up to finite cover in \cite{BRW}. Note that there are exactly two faithful actions of $\SL$ on $(n-1)$-manifolds; the actions on $\RP$ and the lift of that action to $\Snone$. For additional context on the Zimmer program, see, for example, the survey \cite{F}.

For manifolds of dimension $n$, even actions of $\SL$ are complicated with several examples illustrating their complexity. Fisher--Melnick \cite{FM} provided a complete classification of both real analytic and smooth $\SL$-actions on closed $n$-dimensional manifolds. For lattice actions, several exotic examples exist. One well-known is the construction by Katok--Lewis \cite{KL}, in which $\SLZ$-actions that are not conjugate to algebraic actions, are constructed by blowing up the fixed point at the origin on the torus $\mathbf{T}^n$. Fisher--Melnick also introduced another exotic example in their paper. However, a complete classification of lattice actions of $\SL$ remains open.
\begin{conj} [{\cite[Conjecture 3.6]{FM}}]
    Let $\Gamma<\SL$ be a lattice and $M$ a compact manifold of dimension $n$. For any smooth action $\rho:\Gamma\rightarrow$ Diff($M$), one of the following holds:
    \begin{enumerate}
        \item $\rho$ extends to an action of $\SL$ or $\widetilde{\SL}$,
        \item $\rho$ factors through a finite quotient of $\Gamma$, or
        \item $\rho$ is built from tori, $G$-tubes, $G$-disks, blow-ups, and two-sided blow-ups, with $\Gamma$ a finite-index subgroup of $\SLZ$.
    \end{enumerate}
\end{conj}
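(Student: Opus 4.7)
The plan proceeds in stages, combining rigidity for higher-rank lattices with the $\SL$-classification established in this paper; I emphasize at the outset that since this remains a genuinely open conjecture, what follows is a strategic outline with the principal obstruction explicitly identified.

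The preliminary reduction uses Margulis's normal subgroup theorem. Since $\Gamma<\SL$ is an irreducible higher-rank lattice for $n\geq 3$, the kernel of $\rho$ is either finite and central or of finite index. In the latter case $\rho$ factors through a finite quotient and we land in alternative (2). I may therefore assume $\rho$ is essentially faithful, and the task becomes showing that such a $\rho$ either extends to an $\SL$- or $\widetilde{\SL}$-action (alternative (1)) or is assembled from the arithmetic constructions of (3).

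The central step is to attempt to promote $\rho$ to a smooth $\SL$-action. The natural tool is Margulis cocycle superrigidity applied to the derivative cocycle $D\rho:\Gamma\times M\to\mathrm{GL}(n,\R)$, combined with local linearization at fixed points in the spirit of Cairns--Ghys and the discussion in Section 2.3. The aim is to show that the algebraic structure produced by superrigidity integrates, up to a finite cover, to a continuous action of $\SL$ extending $\rho$; Theorems \ref{thmA} and \ref{thmB} of this paper, restricted to $m=n$, then classify the resulting $\SL$-action and place us in (1). When extension fails, the obstruction should manifest as a $\Gamma$-invariant geometric structure (isolated fixed points, an invariant affine-flat structure with singularities, or an invariant transverse measure) that no continuous $\SL$-action can preserve. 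The argument from here would be inductive: blow down each isolated fixed point to reduce to a $\Gamma$-action with strictly fewer fixed points, identify the blown-up neighborhood as a $G$-disk, $G$-tube, blow-up, or two-sided blow-up, and apply Margulis superrigidity to the linear holonomy of the invariant structure to force commensurability of $\Gamma$ with $\SLZ$. The Katok--Lewis construction and the Fisher--Melnick exotic example should emerge as the terminal pieces of this inductive decomposition.

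The principal obstacle lies in producing the $\SL$-extension in the essentially faithful case. At dimension exactly $n$ the derivative cocycle sits at the borderline of current cocycle-superrigidity technique: the subexponential growth estimates of Brown--Fisher--Hurtado and the stationary-measure arguments of Brown--Rodriguez-Hertz--Wang do not automatically yield extendability without a volume-preservation or lower-dimensional hypothesis. A complete proof will require substantial new input, for instance a smooth analogue of the Fisher--Melnick real-analytic rigidity, distortion bounds for smooth $\Gamma$-actions on $n$-manifolds without invariant volume, or a direct classification of $\Gamma$-stationary measures in this regime. The $\SL$-classification of this paper is an essential input for the extendable case; once extension is available, the trichotomy follows, but the extension problem itself is where the real work remains.
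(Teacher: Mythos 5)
The statement you were asked to prove is not a theorem of this paper: it is Conjecture 3.6 of Fisher--Melnick, quoted verbatim, and the paper explicitly states that ``a complete classification of lattice actions of $\SL$ remains open.'' There is no proof in the paper to compare against, so the only question is whether your proposal constitutes a proof. It does not, and you say so yourself. The preliminary reduction via Margulis's normal subgroup theorem is fine and standard, and your identification of where the difficulty lies is accurate, but everything after that reduction is aspirational. The central step --- promoting an essentially faithful smooth $\Gamma$-action on an $n$-manifold to an $\SL$-action, or else extracting from the failure of extension an invariant structure that forces the Katok--Lewis-type decomposition of alternative (3) --- is precisely the content of the conjecture, not a lemma you can invoke. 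Cocycle superrigidity for the derivative cocycle requires an invariant (or at least suitably quasi-invariant) measure, which is exactly what is unavailable in the non-volume-preserving, dimension-$n$ setting; the Brown--Fisher--Hurtado and Brown--Rodriguez-Hertz--Wang machinery stops at dimension $n-1$ (or $n-1$ with volume) for structural reasons, not incidental ones.

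Concretely, the gap is this: your outline assumes that when extension to $\SL$ fails, the obstruction ``should manifest'' as a $\Gamma$-invariant geometric structure amenable to an inductive blow-down argument. No mechanism is given for producing such a structure, and no argument is given that the inductive decomposition terminates in the pieces listed in (3) rather than in some new exotic example. This is not a fixable technical lapse but the open problem itself. What you have written is a reasonable research program summary --- consistent with how the conjecture is discussed in Section 1.2 of this paper --- but it should not be presented as a proof attempt, and no referee would accept it as one.
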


For further details and relevant definitions, we refer to Fisher--Melnick \cite{FM}. One of the next natural questions concerns the case where the dimension of manifolds is greater than $n$. The main result of this paper lays the groundwork for the Zimmer program, which explores lattice actions of $\SL$ on higher-dimensional manifolds, as $\SL$-actions on manifolds classified in this paper naturally restrict to actions of its lattice. 

\subsection{Outline of the paper}
In Section \hyperref[s2]{2}, we present the necessary preliminaries for this paper. Specifically, we introduce the orbit classification result in Section \hyperref[s2.1]{2.1}, define the suspension space in Section \hyperref[s2.2]{2.2}, and review linearization results in Section \hyperref[s2.3]{2.3}.
In Section \hyperref[s3]{3}, we prove propositions concerning orbit types and global fixed points. 
In Section \hyperref[s4]{4}, we construct two distinct types of actions. 
In Section \hyperref[s5]{5}, we prove the main results.
To conclude, we comment on the density of structural stability in Section \hyperref[s6]{6}.

\subsection*{Acknowledgement}
The author thanks Michael Larsen for helpful discussion on reductive subgroups, and Insung Park for helpful discussion and comments, especially on the density of structural stability. She also thanks Homin Lee and Ralf Spatzier for reading an early draft and providing valuable feedback. Finally, she expresses her deep gratitude to her Ph.D. advisor, David Fisher, for numerous discussions and for supporting the work to completion.

\section{Preliminaries} \label{s2}
\subsection{Classification of orbit types} \label{s2.1}
The orbit classification of $\SL$-actions plays a key role in the results of the main theorems. Cairns--Ghys \cite[Theorem 3.5]{CG} proved the orbit classification for manifolds of dimension at most $n$. A detailed version of this result, along with its proof, also appeared in Fisher--Melnick \cite[Theorem 2.1]{FM}. 

\begin{thm} [{\cite[Theorem 3.5]{CG},\cite[Theorem 2.1]{FM}}]
    \label{thm2.1.}
    Let $G$ be connected and locally isomorphic to $\SL$ with $n\geq 3$, and assume $G$ acts continuously on a topological manifold M of dimension $n$. For any $x\in M$, the orbit $G.x$ is equivariantly homeomorphic to one of the following:
    \begin{enumerate}
        \item a point;
        \item $\Snone$ or $\RP$ with the projective action;
        \item $\R^n\backslash\{0\}$ with the restricted linear action or $(\R^n\backslash\{0\})/\Lambda$, for $\Lambda$ a discrete subgroup of the group of scalars $\R^*$;
        \item one of the following closed, exceptional orbits, or a finite cover:
         \begin{itemize}
             \item For $n=3$, $\mathcal{F}^3_{1,2}$, the variety of complete flags in $\R^3$
             \item For $n=4$, $\mathrm{Gr}(2,4)=\mathcal{F}^4_2$, the Grassmannian of $2$-planes in $\R^4$
         \end{itemize}
    \end{enumerate}
\end{thm}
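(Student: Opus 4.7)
The plan is: for each point $x \in M$, analyze the isotropy subgroup $H = G_x$ and use that the embedding $G.x \cong G/H \hookrightarrow M$ forces $\dim(G/H) \leq n$, so that the Lie algebra $\mathfrak{h}$ of $H^0$ has codimension at most $n$ in $\mathfrak{sl}(n,\R)$. The strategy is to classify such subalgebras up to conjugacy to obtain a finite list of model orbits, then account for the component group $H/H^0$, which produces the discrete covers and quotients.

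The trivial case $\mathfrak{h} = \mathfrak{sl}(n,\R)$ gives a fixed point (case 1). For proper subalgebras, a key input is that the minimal codimension of a proper subalgebra of $\mathfrak{sl}(n,\R)$ is $n - 1$, realized only (up to conjugacy) by the maximal parabolic $\mathfrak{q}$ stabilizing a line in the standard representation; this rules out orbits of dimension strictly between $0$ and $n-1$. Consequently codimension-$(n-1)$ orbits are precisely $G/Q^0 \cong \Snone$ and $G/Q \cong \RP$ with projective action (case 2). For codimension-$n$ orbits (dimension $n$), the analysis splits into two subcases: up to conjugacy, $\mathfrak{h}$ is either the affine stabilizer $\mathfrak{sl}(n-1,\R) \ltimes \R^{n-1}$ of a nonzero vector in $\R^n$, producing $\R^n \setminus \{0\}$ and its quotients by discrete subgroups $\Lambda \subset \R^*$ acting by scalars (case 3), or — only in small rank — the Borel subalgebra of $\mathfrak{sl}(3,\R)$ giving $\mathcal{F}^3_{1,2}$ when $n=3$, or the maximal parabolic $\mathfrak{q}_2$ stabilizing a $2$-plane in $\mathfrak{sl}(4,\R)$ giving $\mathrm{Gr}(2,4)$ when $n=4$ (case 4).

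The main obstacle is the underlying Lie algebra classification: proving rigorously that no subalgebras of codimension $\leq n$ exist beyond the listed ones, uniformly in $n$. I would approach this by invoking Dynkin's classification of maximal subalgebras of $\mathfrak{sl}(n,\R)$: any small-codimension subalgebra is contained in a maximal subalgebra of still small codimension. The maximal subalgebras are either parabolic (flag stabilizers $\mathfrak{q}_k$ of codimension $k(n-k)$, which exceed $n$ unless $(n,k) \in \{(n,1),(3,2),(4,2)\}$) or reductive (such as $\mathfrak{so}(n)$, $\mathfrak{sl}(n,\C)$ treated as real, or $\mathfrak{sp}$, $\mathfrak{sl}(p)\oplus\mathfrak{sl}(q)$ summands, all of which are eliminated by a dimension count once one checks their codimensions grow quadratically in $n$). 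The non-reductive non-parabolic possibility — a Levi decomposition with a nontrivial solvable radical not equal to a parabolic nilradical — is dispatched by noting that such $\mathfrak{h}$ preserves a nontrivial flag in $\R^n$ and hence sits inside a parabolic, reducing to the classified case.

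Finally, the component-group refinement producing the quotients $(\R^n\setminus\{0\})/\Lambda$ and the lift $\Snone \to \RP$ in case (3) and (2) is handled by observing that the only closed subgroups of $\SL$ whose identity component is the affine stabilizer (resp. $Q^0$) are obtained by adjoining scalar matrices from $\R^* \cap \SL$, and these scalars act on $\R^n \setminus \{0\}$ through discrete subgroups $\Lambda$, exhausting the possibilities.
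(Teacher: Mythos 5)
The paper does not actually prove this theorem: it is quoted from Cairns--Ghys and Fisher--Melnick, so there is no internal proof to compare against; the closest analogue in the paper is the proof of Theorem 3.1, which extends the classification to dimensions up to $2n-3$. Your overall strategy --- bound $\mathrm{codim}\,\mathfrak{g}_x\le n$ by invariance of domain, classify the possible isotropy subalgebras up to conjugacy, then enumerate the closed subgroups with a given identity component --- is exactly the strategy of the cited proof and of the paper's Theorem 3.1, so the plan is sound. However, two steps need repair. The component-group step as written would fail: the scalar matrices lying in $\SL$ are only $\pm\mathrm{Id}$ (and $-\mathrm{Id}$ only for $n$ even), so ``adjoining scalar matrices from $\R^*\cap\SL$'' to $E^0\cong\mathrm{SL}(n-1,\R)\ltimes\R^{n-1}$ cannot produce the isotropy groups of the quotients $(\R^n\setminus\{0\})/\Lambda$ with $\Lambda=\lambda^{\Z}$. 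The group of scalars $\R^*$ in case (3) acts on $\R^n\setminus\{0\}$ commuting with $G$ but is not a subgroup of $G$. The correct statement is that $N_G(E^0)=Q$ and $Q/E^0\cong\R^*$ (realized by the determinant of the Levi factor, i.e.\ the map $\nu$ of Section 4), so the closed subgroups $H$ with $H^0=E^0$ are precisely the preimages $\nu^{-1}(\Lambda)$ of closed subgroups $\Lambda\le\R^*$; discrete $\Lambda$ give case (3), and the nondiscrete ones give $Q^0$ and $Q$, hence case (2).

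The second soft spot is that the combinatorial core --- no proper subalgebra of $\mathfrak{sl}(n,\R)$ of codimension $<n-1$; codimension $n-1$ forces a maximal parabolic (line \emph{or} hyperplane stabilizer, related by the outer automorphism, which you omit); codimension $n$ forces the affine algebra or the two small-rank parabolics --- is left as an appeal to Dynkin. Dynkin's lists concern maximal subalgebras of complex or compact simple algebras, and transferring them to real subalgebras of $\mathfrak{sl}(n,\R)$ is not immediate. The cleaner route, and the one the paper itself uses for Theorem 3.1, is to complexify $\mathfrak{g}_x$: if $\mathfrak{g}_x\otimes\C$ is reductive, pass to a compact real form in $\mathfrak{su}(n)$ and use the codimension bound for compact subgroups (Lemma 3.2); if not, the isotropy representation is reducible, so $\mathfrak{g}_x$ lies in the stabilizer of a $p$-dimensional subspace, of codimension $p(n-p)$, and the inequality $p(n-p)\le n$ pins down $p\in\{1,n-1\}$ together with the exceptions $(n,p)=(3,2)$ wait--- rather $(4,2)$ and the flag case for $n=3$. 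You should either carry out that complexification argument or cite the subalgebra classification precisely; you should also justify that a merely continuous orbit is equivariantly homeomorphic to $G/G_x$ (invariance of domain for the open $n$-dimensional orbits, compactness for the closed ones) and account for $G$ being only locally isomorphic to $\SL$, which is the source of the ``finite cover'' clause in case (4).
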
 
In Section \hyperref[s3.1]{3.1}, we will extend the orbit classification for $\SL$-actions on manifolds of dimension greater than $n$.

\subsection{Suspension space}\label{s2.2}
We introduce the concept of a suspension, also known as an induced module in representation theory. It is a useful construction of extending an action $\alpha$ of a subgroup $H\leq G$ on a space $M$ to a larger group $G$ acting on an associated bundle $M^\alpha$, as described below.

Let $H$ be a closed subgroup of a Lie group $G$, and assume that $H$ acts continuously on a topological space $M$. Consider the space $G\times M$, where we define two actions: the right $H$-action and the left $G$-action $$(g,x)\cdot h=(gh, \alpha(h^{-1})(x)), ~ g'\cdot(g,x)=(g'g,x),$$ where $g,g'\in G, x\in M$ and $h\in H$. 

We define the quotient space $M^\alpha:=(G\times M)/H$, which has a fiber bundle structure over $G/H$ with fibers homeomorphic to $M$. For any $(g,x)\in G\times M$, we denote the equivalence class in $M^\alpha$ by $[g,x]$. Since the $G$-action on $G\times M$ commutes with the $H$-action, this gives rise to an induced left $G$-action on $M^\alpha$ by left translation on the first factor. Note that the $G$-action preserves the fibers of $M^\alpha$.

\subsection{Linearization results}\label{s2.3}
In this subsection, we recall the previous results for linearization. When a group $G$ acts on a manifold $M$ and fixes a point $x\in M$, the differential of the action induces a linear action on the tangent space $T_xM$ at $x$. If the action of $G$ on $M$ is locally equivalent to its linear action on $T_xM$, the action is called $linearizable$. 

For the compact group actions, we have the linearization result of Bochner--Cartan. Here, we consider group actions on $(\R^m,0)$ where the origin $0\in\R^m$ is fixed by the action.

\begin{thm} [{\cite{B}}]
\label{thm2.2.}
    For all $k=1,2,...,\infty$, every $C^k$-action of a compact group on $(\R^m,0)$ is $C^k$-linearizable.
\end{thm}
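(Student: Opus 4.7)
The plan is to use the classical Bochner averaging construction with Haar measure. Let $G$ be the compact group acting by $C^k$-diffeomorphisms $\phi_g$ on $(\mathbb{R}^m,0)$ with $\phi_g(0)=0$, and let $\rho:G\to\mathrm{GL}(m,\mathbb{R})$ be the linear representation defined by $\rho(g)=D\phi_g(0)$. Denote by $\mu$ the normalized Haar measure on $G$. I would define the candidate conjugacy
$$\Phi(x)=\int_G \rho(g)^{-1}\,\phi_g(x)\,d\mu(g).$$
The goal is to show $\Phi$ is a local $C^k$-diffeomorphism at $0$ that intertwines the nonlinear action with the linear one.

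First I would verify that $\Phi$ is of class $C^k$ in a neighborhood of $0$ by differentiating under the integral sign; this is justified because the integrand is $C^k$ in $x$, continuous in $g$, and $G$ is compact, so all partial derivatives of order $\leq k$ are uniformly bounded on a neighborhood of $0$. Next, I would compute
$$D\Phi(0)=\int_G \rho(g)^{-1}D\phi_g(0)\,d\mu(g)=\int_G \rho(g)^{-1}\rho(g)\,d\mu(g)=\mathrm{Id}.$$
The inverse function theorem then gives that $\Phi$ is a local $C^k$-diffeomorphism from a neighborhood of $0$ to a neighborhood of $0$.

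Finally, I would check equivariance. For $h\in G$,
$$\Phi(\phi_h(x))=\int_G \rho(g)^{-1}\phi_g(\phi_h(x))\,d\mu(g)=\int_G \rho(g)^{-1}\phi_{gh}(x)\,d\mu(g).$$
Substituting $g'=gh$ and using left-invariance of $\mu$ together with $\rho(g)^{-1}=\rho(g'h^{-1})^{-1}=\rho(h)\rho(g')^{-1}$, this becomes
$$\rho(h)\int_G \rho(g')^{-1}\phi_{g'}(x)\,d\mu(g')=\rho(h)\Phi(x),$$
so $\Phi\circ\phi_h=\rho(h)\circ\Phi$, which is the desired linearization.

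The main technical point, rather than an obstacle, is the regularity of $\Phi$: one needs the joint continuity of $(g,x)\mapsto\phi_g(x)$ and $(g,x)\mapsto D^\alpha_x\phi_g(x)$ for multi-indices $|\alpha|\leq k$, so that differentiation under the integral sign is legitimate and $\Phi\in C^k$. This is automatic for $C^k$-actions of compact groups in the usual sense, and the case $k=\infty$ follows by applying the argument for each finite $k$ to the same $\Phi$.
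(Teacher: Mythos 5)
Your proof is correct and is exactly the classical Bochner averaging argument that the paper simply cites from \cite{B} without reproducing; the candidate conjugacy $\Phi(x)=\int_G\rho(g)^{-1}\phi_g(x)\,d\mu(g)$, the computation $D\Phi(0)=\mathrm{Id}$, and the equivariance check are the standard route. One small wording fix: the substitution $g'=gh$ uses \emph{right}-invariance of the Haar measure, not left-invariance, which is harmless here because compact groups are unimodular; you may also want to note that by compactness one can first shrink to a common $G$-invariant neighborhood of $0$ on which all the $\phi_g$ are defined.
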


For non-compact group actions, Guillemin--Sternberg \cite{GS} and Kushinirenko \cite{K} proved that a real-analytic action of a semisimple Lie group $G$ is linearizable near any fixed point. Using this theorem, Uchida classified all analytic $\SL$-actions on $S^n$ for $n\geq 3$ in \cite{U1}, and on $S^m$ for $5\leq n\leq m\leq 2n-2$ in \cite{U2}. 

\begin{thm} [{\cite{GS,K}}]
\label{thm2.3.}
    Every analytic action of $\SL$ on $(\R^m,0)$ is analytically linearizable.
\end{thm}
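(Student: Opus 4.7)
The plan is to produce the linearizing analytic diffeomorphism in two stages: first construct a formal power series conjugating the given action $\alpha$ to its linear part $\rho := d\alpha_{0}$, and then show this formal series converges by exploiting the maximal compact subgroup $\SO$ together with the algebraic structure of $\SL$.

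\textbf{Formal linearization.} Expand the action in Taylor components
$\alpha(g)(x) = \rho(g)x + \alpha_{2}(g)(x) + \alpha_{3}(g)(x) + \cdots$,
where $\alpha_{k}(g)$ is a homogeneous polynomial map of degree $k$ in $x$ parametrized by $g\in\SL$. Seek a formal change of coordinates $\Phi = \mathrm{Id} + \Phi_{2} + \Phi_{3} + \cdots$ tangent to the identity with $\Phi\circ\alpha(g) = \rho(g)\circ\Phi$ for every $g$. Grouping by degree, at each order $k\ge 2$ the unknown $\Phi_{k}$ must satisfy a linear equation in the finite-dimensional $\SL$-module $W_{k} := \mathrm{Sym}^{k}((\R^{m})^{*})\otimes\R^{m}$, asserting that a specific $1$-cocycle built from the lower-order data is a coboundary. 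Because $\mathfrak{sl}_{n}(\R)$ is semisimple, Whitehead's first lemma gives $H^{1}(\mathfrak{sl}_{n},W_{k})=0$ for every $k$, so a solution $\Phi_{k}$ exists at every order; using an $\SO$-invariant inner product on $W_{k}$ we may further select it canonically in the orthogonal complement of the $\SL$-invariants.

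\textbf{Analytic convergence.} First apply Theorem \hyperref[thm2.2.]{2.2} to analytically linearize the restricted action of $\SO$; after this preliminary change of coordinates, $\alpha|_{\SO}$ is already linear, and the $\Phi_{k}$ from the previous stage can be averaged over $\SO$ to become $\SO$-equivariant without disturbing the formal identity. It then remains to derive uniform bounds $\|\Phi_{k}\|\le CR^{k}$ on some fixed ball. Using the Cartan decomposition $\SL=\SO\cdot A\cdot\SO$, the problem reduces to controlling $\Phi$ along one-parameter subgroups in $A$: all weights of $\rho|_{A}$ are rational, so the cohomological operator being inverted at each order has its spectrum uniformly bounded away from $0$. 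Combining this uniform invertibility with the Cauchy estimates on $\alpha_{k}$ coming from analyticity of $\alpha$ yields the required geometric bound, and hence convergence of $\Phi$ on a neighborhood of $0$.

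The main obstacle is clearly Stage 2: formal solvability follows immediately from Whitehead's lemma, but upgrading a formal conjugation to a convergent analytic one is delicate and uses the rigidity of $\SL$ crucially. Guillemin--Sternberg handle convergence by averaging over $\SO$ together with Poincar\'e--Dulac-type normal form techniques, while Kushnirenko argues algebraic-geometrically on orbits in the jet spaces of $(\R^{m},0)$. Both routes rely essentially on the semisimplicity and algebraicity of $\SL$ to rule out the small-denominator obstructions familiar from Siegel-type problems in analytic dynamics; without these structural features, formal linearization need not yield an analytic one.
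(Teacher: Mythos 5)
The paper does not actually prove this statement: Theorem~2.3 is imported verbatim from Guillemin--Sternberg and Kushnirenko, so there is no internal proof to compare yours against. Judged on its own terms, your two-stage outline (formal linearization, then convergence) is the correct skeleton and matches how the cited sources are organized. The formal stage is accurate: the degree-$k$ obstruction is a $1$-cocycle valued in $W_k=\mathrm{Sym}^k((\R^m)^*)\otimes\R^m$, and Whitehead's first lemma kills $H^1(\mathfrak{sl}(n,\R),W_k)$, so a solution $\Phi_k$ exists at every order (and, since the group is connected, solving at the Lie-algebra level suffices).

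The convergence stage is where all of the difficulty lies, and as written it has a genuine gap. First, the relevant uniform invertibility is not really about ``weights of $\rho|_A$ being rational'': what one needs is a lower bound, independent of $k$, on the first singular value of the coboundary operator $W_k\to Z^1(\mathfrak{g},W_k)$ on the orthocomplement of the invariants, which follows from the spectral gap of the Casimir on nontrivial irreducible constituents of $W_k$ (the eigenvalues are bounded below by a positive constant because the highest weights occurring range over a fixed lattice). This should be stated and justified, since it is the precise reason there are no Siegel-type small denominators. Second, and more seriously, the degree-$k$ equation does not have right-hand side $\alpha_k$ alone: the cocycle at order $k$ is assembled from compositions of the previously constructed $\Phi_2,\dots,\Phi_{k-1}$ with the lower Taylor components of $\alpha$, so ``uniform invertibility plus Cauchy estimates'' does not directly yield $\|\Phi_k\|\le CR^k$. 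One must close a nonlinear recursion, either by a majorant-series argument or by replacing the order-by-order estimates with the compact-form averaging device (complexify, restrict to the compact real form, verify that local orbits near $0$ stay in a fixed neighborhood, and then average as in the Bochner--Cartan proof), which is essentially what the cited references do. Your proposal correctly identifies where the work is, but it does not carry that work out.
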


Later, Cairns--Ghys established a smooth linearization result for $\SL$-actions on $\R^n$ fixing 0 in \cite{CG}. This result enabled Fisher--Melnick \cite{FM} to classify not only real analytic but also smooth $\SL$-actions on closed manifolds $M$ of dimension $n$, for $n\geq3$. 

\begin{thm} [{\cite{CG}}]
    For all $n>1$ and for all $k=1,...,\infty$, every $C^k$-action of $\SL$ on $(\R^n,0)$ is $C^k$-linearizable.
\end{thm}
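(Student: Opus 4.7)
The plan is to reduce to linearizing a single one-parameter subgroup once the maximal compact has been handled via Bochner's theorem. First I would apply Theorem~\ref{thm2.2.} to $C^k$-linearize the maximal compact $K = \SO$; after post-composing with the resulting $C^k$-diffeomorphism of $(\R^n,0)$, one may assume that $K$ already acts in the standard way on $\R^n$. In particular, the $K$-orbits in $\R^n \setminus \{0\}$ become honest Euclidean spheres, and the fixed set in $\R^n$ of the block copy of $\SOone \leq K$ stabilizing $e_1$ is exactly the coordinate line $L = \R e_1$.

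Using the Iwasawa decomposition $\SL = KAN$, I would then focus on the one-parameter subgroup $a_t = \mathrm{diag}(e^{t}, e^{-t/(n-1)}, \dots, e^{-t/(n-1)})$, chosen so that it centralizes $\SOone$. Because $\SOone$ now acts linearly with fixed set $L$, the flow $a_t$ preserves $L$ and restricts to a $C^k$-flow on $L$ fixing $0$ whose generating vector field has nonzero derivative at the origin; such a one-dimensional flow is $C^k$-linearizable by a direct change of coordinate on $L$. Transporting this one-dimensional coordinate change around by the transitive $K$-action on spheres yields a $K$-equivariant $C^k$-diffeomorphism of $\R^n$ that simultaneously linearizes $K$ and $a_t$.

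The final step is a rigidity argument: once $K$ and the Cartan generator $a_t$ both act linearly via the standard representation and the action is $C^k$ fixing $0$, the whole $\SL$-action must be linear. I would argue this from the fact that $K$ together with $a_t$ generate $\SL$ algebraically, so every $g \in \SL$ is a finite product of elements that already act linearly, and hence $g$ itself acts linearly.

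The step I expect to be the main obstacle is controlling $C^k$-regularity across the origin. Extending the one-dimensional linearization of $a_t|_L$ to all of $\R^n$ by $K$-equivariance gives a map that is manifestly $C^k$ on $\R^n \setminus \{0\}$, but its smoothness at $0$ requires matching the equivariant Taylor expansion of the $K$-representation there; this is where the argument really uses that the representation is the defining, minimal-dimensional one, so that the isotropy data at $0$ are rigid enough to force a matching of jets up to order $k$. A secondary subtlety is that the $K$-equivariant extension must be performed after, not before, linearizing $a_t$ on the slice $L$, so as not to destroy the one-dimensional normalization; being careful about the order of operations is what keeps the whole construction $C^k$ at the origin.
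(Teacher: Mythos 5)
First, a point of comparison: the paper does not prove this statement at all --- it is imported verbatim from Cairns--Ghys \cite{CG} as a black box --- so there is no internal proof to measure yours against; I can only judge your outline against what the theorem actually requires. Your opening moves (Bochner-linearize $K=\SO$ via Theorem \ref{thm2.2.}, pass to the $\SOone$-fixed line $L$, linearize the one-dimensional flow of $a_t$ on $L$, and note that $K$ together with $\{a_t\}$ generates $\SL$, so that simultaneous linearity of these subgroups forces linearity of the whole action) do match the general shape of the argument in \cite{CG}. The generation step itself is sound: the $\mathrm{Ad}(K)$-orbit of the generator of $a_t$ spans the symmetric traceless matrices, so the connected subgroup generated by $K$ and $\{a_t\}$ is all of $\SL$, and a finite product of linearly acting elements acts linearly.

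There are, however, two genuine gaps. (i) You assert that the generating vector field of $a_t|_L$ has nonzero derivative at the origin. This is equivalent to the isotropy representation of $\SL$ on $T_0\R^n$ being nontrivial (standard or dual, cf.\ Remark \ref{rmk3.6.}); since $\SL$ is simple, the trivial isotropy representation is an a priori possibility, and ruling it out (showing that a $C^k$-action with trivial $1$-jet at the fixed point is trivial) is a separate, nontrivial step of \cite{CG} that your outline omits. Without it your one-dimensional flow may fail to be hyperbolic and the whole scheme stalls. (ii) The step you flag as ``the main obstacle'' --- $C^k$-regularity at the origin of the $K$-equivariant radial extension --- is not a technicality to be controlled but is essentially the entire content of the theorem, and the sentence ``the isotropy data at $0$ are rigid enough to force a matching of jets'' is not an argument. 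Concretely, the radial extension $v\mapsto \bigl(\phi(|v|)/|v|\bigr)v$ of a $C^k$ conjugacy $\phi$ of the flow on $L$ produces terms of the form $|v|^{j}v$, which are only finitely differentiable at $0$ (already $|v|v$ is $C^1$ but not $C^2$); even after using the Weyl-group element reversing $L$ to make $\phi$ odd, rewriting an even function of $|v|$ as a function of $|v|^2$ costs roughly half the derivatives. Since the $C^1$ linearization of a hyperbolic one-dimensional flow is unique up to scale, there is no remaining freedom in $\phi$ with which to repair this; one must instead prove that the given $\SL$-action forces the required regularity, which uses the unipotent part of the parabolic $Q$ and the suspension structure of the punctured local action --- i.e., the actual work of \cite{CG}. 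A smaller but related gap: your construction only linearizes $a_t$ on the slice $L$, whereas your final generation argument needs $a_t$ to act linearly on all of $\R^n$; bridging that requires identifying the punctured neighborhood equivariantly with a suspension over $G/Q$, which you do not do.
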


In the absence of a smooth linearization result on $\R^m$ for $m>n$, we are unable to extend our classification to smooth actions on manifolds with fixed points. We leave this as an open question.

\begin{question} \label{question2.5.}
    For all $3\leq n\leq m<2n-2$, and $k=1,2,...,\infty$, every $C^k$-action of $\SL$ on $(\R^m, 0)$ is $C^k$-linearizable.  
\end{question}

Remark that Cairns--Ghys \cite[Section 9]{CG} provides an example of a non-linearizable smooth action of $\mathrm{SL}(3,\R)$ on $\R^8$. This counter-example is obtained by deforming the adjoint action of $\mathrm{SL}(3,\R)$ on its Lie algebra $\mathfrak{sl}(3,\R)\cong\R^8$. Their construction may be extended to produce examples of non-linearizable smooth $\SL$-action on $\R^{n^2-1}$ for $n\geq3$, demonstrating that $n^2-1$ is a strict upper bound for the dimension $m$ in the question. We tentatively establish an upper bound of $m$ as $2n-2$, based on the orbit classification result in the next section.

\section{The orbit types and Fixed points} \label{s3}
Throughout this section, we fix $G=\SL$. 
We investigate the orbit types and global fixed points of $G$-actions, both of which play a key role in constructing actions and proving the main results. 

\subsection{The extended orbit classification}\label{s3.1} 
We extend the orbit classification by considering all possible homogeneous spaces arising from $\SL$-actions on $m$-dimensional manifolds, where $m\leq2n-3$. No new orbit types appear until the dimension reaches $2n-4$. When $2n-4$, a new orbit $\mathrm{Gr}(2,n)$, the Grassmannian of $2$-planes in $\R^n$, arises. When $m\geq2n-2$, fundamentally different types of orbits come up, see Remark \hyperref[rmk3.3.]{3.3} for details. Let $Q\leq\SL$ denote the stabilizer of a line in the standard representation of $\R^n$, and $Q_2\leq\SL$ denote the stabilizer of a 2-plane in the standard representation of $\R^n$.

\begin{thm}\label{thm3.1.}
    Let $G\cong\SL$ act continuously on a topological $m$-dimensional manifold $M$, with $m\leq2n-3$. For any $x\in M$, the orbit $G.x$ is equivariantly homeomorphic to one of the following: 
    \begin{enumerate}
        \item a point;
        \item $\Snone$ or $\RP$ with the projective action;
        \item $\R^n\backslash\{0\}$ with the restricted linear action or $(\R^n\backslash\{0\})/\Lambda$, for $\Lambda$ a discrete subgroup of the group of scalars $\R^*$;
        \item $\mathrm{Gr}(2,n)$, the real Grassmannian of $2$-planes in $\R^n$;
        \item a suspension space $G\times_{Q_2}\R$ or $(G\times_{Q_2}\R)/\Lambda$, where $Q_2$ acts on $\R$ via a homeomorphism factoring through $\R^*$ and $\Lambda$ a discrete subgroup of the group of scalars $\R^*$;
        \item $\mathcal{F}^n_{1,2}$, the variety of partial flags $V_1\subset V_2\subset \R^n$, with $\mathrm{dim}(V_1)=1$ and $\mathrm{dim}(V_2)=2$;
        \item finitely many $\mathrm{SL}(4,\R)$-action on $5$-dimensional homogeneous spaces arising from reductive subgroups, see Remark \hyperref[rmk3.4.]{3.4}.
    \end{enumerate}
\end{thm}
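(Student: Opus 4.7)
The plan is to classify the orbit $G.x \cong G/H$, with $H = \mathrm{Stab}_G(x)$, by bounding the codimension of $H$ in $G = \SL$ and enumerating the resulting homogeneous spaces. Since $G/H$ sits inside the $m$-manifold $M$, we have $\mathrm{codim}_G H \leq m \leq 2n-3$, so $\dim H \geq (n-1)^2 + 1$. The classification is essentially determined by the identity component $H^0$; the quotients by discrete $\Lambda \subset \R^*$ in cases (3) and (5) will come from the component group $H/H^0$, so I would first classify $H^0$ and then recover $H$.

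At the Lie algebra level, I would set $\mathfrak{h} = \mathrm{Lie}(H^0) \subseteq \mathfrak{sl}_n(\R)$ and split into two cases depending on whether the nilradical $\mathfrak{u}$ of $\mathfrak{h}$ is trivial. If $\mathfrak{u} \neq 0$, then $\mathfrak{h}$ lies inside a proper parabolic subalgebra $\mathfrak{p}$, by the standard fact that the normalizer of a nonzero nilpotent ideal is parabolic. Enumerating parabolics of codimension at most $2n-3$: the maximal parabolics $\mathfrak{q}_k$ (stabilizers of $k$-planes) have codimension $k(n-k)$, so only $k \in \{1, n-1\}$ (codim $n-1$) and $k \in \{2, n-2\}$ (codim $2n-4$) fit, and among the parabolics attached to two simple roots only $\mathfrak{p}_{1,2}$ fits (codim $2n-3$). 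Once $\mathfrak{h}$ is placed in a specific parabolic, a dimension count inside the Levi factor isolates the listed orbits: taking $\mathfrak{h}$ equal to the full parabolic gives cases (2), (4), or (6); taking $\mathfrak{h}$ to be the stabilizer of a vector inside $\mathfrak{q}$ gives case (3); and taking $\mathfrak{h}$ to be the kernel of a nontrivial character $Q_2 \to \R^*$ gives the suspension in case (5).

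If instead $\mathfrak{u} = 0$, then $\mathfrak{h}$ is reductive, and either equals $\mathfrak{sl}_n$ (giving the fixed point of case (1)) or is a proper reductive subalgebra. I would invoke the classification of maximal reductive subalgebras of $\mathfrak{sl}_n(\R)$ (after Dynkin and Mostow) to see that the only codimension-$\leq 2n-3$ possibilities arise for $n=4$, where $\mathfrak{sp}_4 \subset \mathfrak{sl}_4$ has codimension $5 = 2n-3$; after enumerating its finite-index extensions and intermediate reductive subgroups, this accounts for the finitely many $\mathrm{SL}(4,\R)$-spaces of case (7). Finally, to pass from $H^0$ to $H$, I would observe that $H \subseteq N_G(H^0)$ and that the relevant component groups are discrete, with the only nontrivial factors coming from the $\R^*$-centers of the Levi in the relevant parabolics; discrete subgroups of these $\R^*$-factors produce precisely the $\Lambda$-quotients appearing in cases (3) and (5).

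The step I expect to be the main obstacle is the sharp bounding inside the parabolic case: given $\mathfrak{h} \subseteq \mathfrak{p}$ with $\dim \mathfrak{h} \geq (n-1)^2 + 1$, one must rule out a priori intermediate subalgebras that are neither the full parabolic nor one of the specific codimension-one subalgebras described above, which requires a careful subalgebra analysis inside the Levi factor. The low-rank cases, in particular the reductive subgroup classification contributing case (7) for $n=4$, will need separate verification.
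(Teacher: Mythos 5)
Your overall strategy --- bounding $\mathrm{codim}_G H\leq 2n-3$, splitting according to whether $H^0$ lies in a proper parabolic or is reductive, and then enumerating --- is a genuinely different route from the paper's. The paper complexifies $\cG_x$, handles the reductive case by passing to a compact real form inside $\SU$ (Lemma 3.2), and handles the non-reductive case through a $p$-dimensional invariant subspace $W\subset\C^n$ and the position of $W\cap\overline W$; your real enumeration of parabolics avoids the $W\cap\overline W=0$ subtleties at the cost of having to get the enumeration exactly right. One adjustment to your setup: the dichotomy should be phrased as ``the standard representation $\R^n$ is reducible for $\mathfrak h$ (then $\mathfrak h$ lies in a maximal parabolic) versus irreducible (then $\mathfrak h$ is reductive),'' since with either standard definition of the nilradical one of the implications you need fails (an abelian subalgebra is its own nilpotent radical yet can act irreducibly, while a single non-semisimple, non-nilpotent element generates a subalgebra with no nonzero ideal of nilpotent endomorphisms that still preserves a proper subspace). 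The passage from $H^0$ to $H$ via $N_G(H^0)$ is fine and matches what the paper does through the subgroups $E_\Lambda$ and $E_{\Lambda,2}$.

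The genuine gap is in the enumeration of parabolics, which is the engine of your argument and is stated incorrectly. Among maximal parabolics, $\mathfrak q_3$ also satisfies $\mathrm{codim}=3(n-3)\leq 2n-3$ when $n\leq 6$; for $n=5$ this is disposed of by duality with $\mathfrak q_2$, but for $n=6$ the self-dual Grassmannian $\mathrm{Gr}(3,6)$ has dimension $9=2n-3$ and is not equivariantly homeomorphic to any space obtained from $\mathfrak q_1$ or $\mathfrak q_2$. Among two-node parabolics, $\mathfrak p_{1,n-1}$ (the stabilizer of a line inside a hyperplane, block type $(1,n-2,1)$) has codimension $1\cdot(n-2)+1\cdot1+(n-2)\cdot1=2n-3$, exactly like $\mathfrak p_{1,2}$, and for $n\geq4$ it is not conjugate to $\mathfrak p_{1,2}$ even after the outer automorphism $g\mapsto(g^{-1})^t$, since that automorphism fixes the block type $(1,n-2,1)$. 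Neither space can be ruled out: each is itself a closed $(2n-3)$-manifold carrying a transitive real-analytic action, so $\mathcal F^n_{1,n-1}$ and (for $n=6$) $\mathrm{Gr}(3,6)$ must be added to your list. (The corresponding steps of the proof above --- restricting $p$ to $\{1,2,n-2,n-1\}$ and recording only the block type $(1,1,n-2)$ in the reducible sub-case --- pass over the same two possibilities, so this point deserves attention in the statement of the theorem as well.) Finally, the step you flag as the main obstacle, namely showing that the only subalgebras strictly between $\mathfrak g_x$ and the ambient parabolic are those listed (in particular that $\mathfrak q_2$ has exactly two codimension-one subalgebras and that $\mathfrak q$ admits no subalgebra of intermediate codimension projecting onto an irreducible subgroup of the Levi factor), is where most of the work in the paper's proof lives, and your proposal does not yet carry it out.
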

    
\begin{proof}
    Suppose that a homogeneous space $\cO_x = G.x$ is an orbit of $G$-action on a manifold, which is identified with $G/G_x$ where $G_x \leq G$ is a closed subgroup. 
    
    If the orbit has dimension less than or equal to $n$, then it is one of the cases (1), (2), or (3) by Theorem \hyperref[thm2.1.]{2.1}.

    Let $\cO_x$ be a $k$-dimensional $G$-orbit with $n<k\leq m$. Consider the complexified representation of the Lie algebra $\cG_x\otimes\C \subset 
    \cG_\C = \mathfrak{sl}(n,\C)$, where $\cG_x$ denotes the Lie algebra of $G_x$. Suppose that $\cG_x\otimes\C$ is a reductive subalgebra of $\cG_\C$ with the codimension $k$. By taking a suitable real form of $\cG_x\otimes\C$, we obtain a compact subgroup $K$ in $\SU$ whose real codimension is $k$. However, there is no compact subgroup $K$ of $\SU$ with codimension $k$ satisfying $4<n<k<2n-2$. This is because the highest-dimensional proper compact subgroup of $\SU$ is $\mathrm{U}(n-1)$, which has codimension $2n-2$, when $n>4$, see below Lemma \hyperref[lem3.2.]{3.2}. When $n=4$, there exist homogeneous spaces of dimension $5$, which correspond to case (7). 
    
    Now assume that $\cG_x\otimes\C$ is not a reductive subalgebra of $\cG_\C$, which implies that the isotropy representation on $V=(\cG/\cG_x)_\C\cong\C^k$ is reducible. There exists a nontrivial invariant complex subspace $W$ of dimension $p$, for some $0<p<k$. The stabilizer of such a $p$-dimensional subspace has codimension $p(n-p)$, so that $0<p(n-p) \leq k$. Under the assumptions $4\leq n<k\leq2n-3$, the only possible values for $p$ are $p=1$, $p=2$, $p=n-2$ or $p=n-1$. For $p=n-1$ or $p=n-2$, replacing $G_x$ by the outer automorphism $g\mapsto(g^{-1})^t$ reduces the problem to the case $p=1$ or $p=2$. 

    Consider $p=1$ with the invariant complex line $W$. If $W\cap\overline{W}=0$, $\mathfrak{g}_x\otimes\C$ preserves the flag $W\subset U=W\oplus\overline{W}\subset\C^n$. The stabilizer of such a flag has codimension $2n-3$, and assume that it equals $\mathfrak{g}_x\otimes\C$ based on our dimension assumptions. The intersection $U_0=U\cap\overline{U}$ is a real $2$-dimensional and $\mathfrak{g}_x$-invariant. The $\mathfrak{g}_x\otimes\C$-invariant decomposition $U=W\oplus\overline{W}$ defines an $\mathfrak{g}_x$-invariant complex structure on $U_0$ as the following: for any $u\in U$, there is a unique $w\in W$ such that $$u=\frac{1}{2}(w+\overline{w}).$$
    Then, $$J(u):=\frac{i}{2}(\overline{w}-w)$$ defines $\mathfrak{g}_x$-equivariant automorphism of $U_0$ with $J^2=-\mathrm{Id}$. Then $\mathfrak{g}_x$ is contained in the stabilizer of a $2$-plane $U_0$ in $\R^n$ together with a complex structure on $U_0$. And the codimension of this stabilizer in $\mathfrak{sl}(n,\R)$ is $2n-2$, which is excluded by our dimension assumptions.

    Now assume that $W^0=W\cap\overline{W}$ is a real line, $\mathfrak{g}_x$ has codimension $1$ in its stabilizer. If $\mathfrak{g}_x$ acts reducibly on $V_0/W_0$, then it is conjugate to a parabolic subalgebra consisting of upper triangular matrices with block sizes $1,1,n-2$, and trace zero. In this case, the homogeneous space $\mathcal{F}^n_{1,2}$ having dimension $2n-3$ appears, which corresponds to case (6).
    
    Let $\cG_x$ act irreducibly on $V_0/W_0$. Consider the parabolic subalgebra $\mathfrak{q}$ stabilizing a line in $\R^m$, and $\cG_x\subset \mathfrak{q}$ has codimension at least 2 in $\mathfrak{q}$. The parabolic subgroup $Q$ has Levi decomposition $Q\cong\GL\ltimes\R^{n-1}$, where the unipotent radical $\R^{n-1}$ acts reducibly on $V_0/W_0$. Since $G_x$ acts irreducibly, $G_x$ must lie entirely within the Levi subgroup $\GL$. Consequently, $G_x$ projects onto a proper, closed, irreducible subgroup of $\GL$ whose codimension lies between $2$ (when $k=n+1$) and $n-4$ (when $k=2n-5$). However, no such subgroup exists for $n\geq 4$, leading to a contradiction. 
    
    Now consider the case when $p=2$ and $n\geq4$ with the invariant $2$-dimensional complex subspace $T\subset V$. If $T\cap\overline{T}=0$, then $\cG_x\otimes\C$ preserves the flag $T\subsetneq T\oplus\overline{T}\subset\C^{2n-4}$. The stabilizer of such a flag has codimension $2(n-2)+2(n-4)$ which is strictly greater than $2n-3$ when $n>3$. Therefore, this case cannot occur.

    Now assume that $T_0=T\cap\overline{T}$ is a real line, and $\cG_x$ has codimension one in its stabilizer. The reducible cases for $\cG_x$ have already been addressed in the above cases for $p=1$, so here we focus on the case where $\cG_x$ acts irreducibly on $V_0/T_0$ In this setting, $\cG_x$ is a codimension one subalgebra of the stabilizer $\mathfrak{q}_2$ of a $2$-dimensional complex subspace $T\subset V$ in $\R^n$. The corresponding parabolic subgroup $Q_2$ admits Levi decomposition $$Q_2\cong(\mathrm{GL}(2,\R)\times\mathrm{GL}(n-2,\R))\ltimes\R^{2n-4},$$ where the unipotent radical $\R^{2n-4}$ acts reducibly on $V_0/T_0$. Because $G_x$ acts irreducibly, $G_x$ must be contained entirely within the Levi part $\mathrm{GL}(2,\R)\times\mathrm{GL}(n-2,\R)$ as a closed, codimension one subgroup that remains irreducible on $V_0/T_0$. The only possibility for such a subgroup is that $\mathfrak{g}_x$ projects onto $$\mathfrak{sl}(2,\R)\times\mathfrak{sl}(n-2,\R)\subset\mathfrak{gl}(2,\R)\times\mathfrak{gl}(n-2,\R).$$ Let $E_2^0$ denote the connected, normal subgroup of $Q_2$ isomorphic to $(\mathrm{SL}(2,\R)\times\mathrm{SL}(n-2,\R))\ltimes\R^{2n-4}$, and $E_{\Lambda,2}\lhd Q_2$ the preimage of $\Lambda<Q_2/E_2^0\cong\R^*$. The possibilities described in case (5) correspond to $G_x=E^0_2$ or $E_{\Lambda,2}$.
    
    Finally suppose that  $\mathrm{dim}_\R(T\cap\overline{T})=2$. In this case, $\cG_x\otimes\C$ does not preserve any proper subgroup of $T$. $\cG_x$ is contained in the stabilizer of a $2$-dimensional subspace of $\R^n$. This stabilizer has real codimension $2(n-2)$ which equals the codimension of the orbit $G/G_x$. Therefore, $G_x=\mathrm{stab}_G(\R^2)$ and the homogeneous space $\mathrm{Gr}(2,n)$ appear, which correspond to case (4).
\end{proof}

The following lemma represents the smallest possible dimension for orbits arising from a reductive subgroup. 

\begin{lem}[{\cite{Q}}]\label{lem3.2.}  
    The highest-dimensional proper compact subgroup of $\SU$ is $\mathrm{U}(n-1)$ when $n>4$.
\end{lem}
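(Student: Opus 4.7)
Since $\SU$ is compact, every closed subgroup is compact and has the same dimension as its identity component. Hence it suffices to bound $\dim H$ for an arbitrary proper closed connected subgroup $H < \SU$. Writing $\dim \SU = n^2 - 1$ and $\dim \mathrm{U}(n-1) = (n-1)^2$, the goal is to show that the codimension of $H$ in $\SU$ is at least $2n - 2$ when $n > 4$. The plan is to split into two cases according to whether the action of $H$ on $\C^n$ (via the inclusion into $\SU$) is reducible or irreducible.

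If the action is reducible, then $H$ preserves an orthogonal decomposition $\C^n = W_1 \oplus \cdots \oplus W_k$ with $k \geq 2$, $n_i := \dim_\C W_i > 0$, and $\sum_i n_i = n$. Hence $H$ is contained in the block-diagonal subgroup $\mathrm{S}(\mathrm{U}(n_1) \times \cdots \times \mathrm{U}(n_k))$, of dimension $\sum_i n_i^2 - 1$. Among partitions of $n$ into $k \geq 2$ positive parts, $\sum_i n_i^2$ is maximized at $(n-1,1)$, giving $\dim H \leq (n-1)^2$, with equality forcing $H = \mathrm{S}(\mathrm{U}(n-1) \times \mathrm{U}(1)) \cong \mathrm{U}(n-1)$.

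If the action is irreducible, then $H^0$ is a connected compact Lie group admitting an $n$-dimensional complex irreducible representation. By Schur's lemma its center acts by scalars, so up to finite extension $H^0$ is a compact semisimple Lie group acting irreducibly on $\C^n$. The classification of such pairs $(H^0, V)$ — equivalently Dynkin's list of maximal irreducibly embedded connected subgroups of $\SU$ — consists of a short list of candidates: $\SO$ via the complexified vector representation (of dimension $n(n-1)/2$); $\mathrm{Sp}(n/2)$ when $n$ is even (dimension $n(n+1)/2$); tensor products $\mathrm{SU}(a)\otimes\mathrm{SU}(b)$ with $ab=n$ (dimension $(a^2-1)+(b^2-1)$); exterior and symmetric powers of the defining representation of $\mathrm{SU}(k)$ for small $k$ (dimension $k^2 - 1$, with $n$ of the form $\binom{k}{p}$ or similar); and finitely many spin and exceptional representations. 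In each case a direct inequality yields $\dim H < (n-1)^2$ for $n > 4$: for instance $n(n+1)/2 < (n-1)^2$ is equivalent to $n^2 - 5n + 2 > 0$, valid for $n \geq 5$; the tensor-product case reduces to $(a^2-1)+(b^2-1) < (ab-1)^2$ for $a,b \geq 2$; and the exterior/symmetric-power embeddings trivially satisfy $k^2 - 1 \ll (n-1)^2$.

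The reducible case is elementary convex optimization, so the main obstacle is the irreducible case, which requires invoking a classification theorem — either Dynkin's maximal-subgroup tables or the enumeration of compact Lie groups with small faithful complex irreducible representations — and a finite but slightly tedious check of each entry. The hypothesis $n > 4$ is essential and sharp: when $n = 4$, the subgroup $\mathrm{Sp}(2) \subset \mathrm{SU}(4)$ has dimension $10 > 9 = \dim \mathrm{U}(3)$, so the lemma fails, reflecting the accidental isomorphism $\mathrm{SU}(4) \cong \mathrm{Spin}(6)$.
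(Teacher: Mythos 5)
Your proposal is correct and follows essentially the same route as the paper: the reducible case is handled by bounding $\dim H$ by the dimension of a block-diagonal subgroup (maximized at the partition $(n-1,1)$, yielding $\mathrm{U}(n-1)$), and the irreducible case is disposed of by appealing to the classification of compact groups admitting a low-dimensional faithful irreducible representation. You are somewhat more explicit where the paper is terse --- the paper splits off the tensor-product case by hand and then cites the ``well-known'' minimal faithful representation dimensions of simple compact groups, whereas you run through Dynkin's list entry by entry --- and your remark that $\mathrm{Sp}(2)\subset\mathrm{SU}(4)$ makes the hypothesis $n>4$ sharp matches the paper's Remark 3.4.
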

\begin{proof}
    Consider elements of $\SU$ as $n\times n$ unitary matrices. Recall that $\SU$ is a reductive group. Let $H$ be a reductive proper subgroup of maximal possible dimension with a faithful representation $V$ of dimension $n$. We claim that, up to conjugacy,  $H$ is $\mathrm{U}(n-1)$ which has dimension $(n-1)^2+1$.

    Suppose first that $V$ decomposes as a direct sum of two subrepresentations of dimension $a$ and $b$, with $a+b=n$. Then the dimension of $H$ is at most $a^2+b^2$ which is less than or equal to $(n-1)^2+1$. This implies that $V$ must be irreducible, except possibly in the case $a=1$.

    Next, suppose that $V$ is isomorphic to a tensor product $V=V_1\otimes V_2$, where $\mathrm{dim} V_1=a$, $\mathrm{dim}V_2=b$, and $ab=n$. Then the dimension of $H$ is at most $a^2+b^2$, and the maximal value of $a^2+b^2$ is $(n/2)^2+(n/2)^2=n^2/2$, which is strictly less than $(n-1)^2+1$ for all $n>4$. Hence, $V$ cannot be a form of tensor product of irreducible representations.
    
    Since $V$ is an irreducible representation that is not a tensor product of lower-dimensional irreducible representations, $H$ is a simple compact Lie group with the minimal dimension of faithful representation $V$ which is well-known. Thus, $H=U(n-1)$ of dimension $(n-1)^2+1$.
\end{proof}

\begin{rmk}\label{rmk3.3.}
    The above lemma shows that new orbit types appear, such as $\SU/\mathrm{U}(n-1)\cong\mathbb{CP}^{n-1}$ of dimension $2n-2$, and $\SU/\mathrm{SU}(n-1)\cong\mathbb{S}^{2n-1}$ of dimension $2n-1$. Additionally, $\SL/\GL$ is another homogeneous space of dimension $2n-2$. Therefore, the dimension bound $2n-2$ marks a natural stopping point for this paper, as several new orbit types begin to appear. 
\end{rmk}

\begin{rmk}\label{rmk3.4.}
    For $n=4$, there exists a compact orbit of dimension $5$ arising from the compact symplectic subgroup $\mathrm{Sp}(2)$ in $\mathrm{SU}(4)$. In what follows, we rule out the compact orbit for the $\mathrm{SL}(4,\R)$-action on a manifold $M$ of dimension $5$.
\end{rmk}

\subsection{The Fixed Points}\label{s3.2}

Consider a $G$-action on a closed manifold $M$. Let $K$ be the maximal compact subgroup of $G$. There exists a $K$-invariant Riemannian metric $\kappa$ on the closed manifold $M$, which is obtained by averaging any Riemannian metric on $M$ over $K$ with respect to the Haar measure.

By applying the linearization theorems for compact group action in Theorem \hyperref[thm2.2.]{2.2} and using the orbit classification in Section \hyperref[s3.1]{3.1}, we obtain information about the fixed points of $G$-action.

\begin{prop} \label{prop3.5.}
    For a nontrivial real-analytic or smooth action of $G\cong\SL$ on a connected $m$-dimensional manifold $M$ for $3\leq n\leq m\leq2n-3$, the fixed points set of $G$-action is a finite union of closed $(m-n)$-dimensional submanifolds.
\end{prop}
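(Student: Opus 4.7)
The plan is to study the local structure of the fixed point set $F$ at any $x \in F$ via (i) the isotropy representation on $T_xM$ and (ii) Bochner's linearization theorem (Theorem~\ref{thm2.2.}) applied to the compact group $K = \SO$, the maximal compact subgroup of $G$. Since $x$ is $G$-fixed, $T_xM$ is an $m$-dimensional $G$-representation. A dimension count shows that the only irreducible $\SL$-representations of real dimension at most $2n-3$ are the trivial representation, the standard representation $\R^n$, and its dual $(\R^n)^*$: for $n \geq 4$ the next smallest irreducible $\wedge^2 \R^n$ has dimension $\binom{n}{2} > 2n-3$, while for $n = 3$ one has $\wedge^2 \R^3 \cong (\R^3)^*$. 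Since $m < 2n$, at most one non-trivial summand can appear, so either $T_xM$ is a trivial $G$-representation, or $T_xM \cong V \oplus \R^{m-n}$ with $V \in \{\R^n, (\R^n)^*\}$ a non-trivial summand; in the latter case $(T_xM)^K$ is exactly the trivial summand $\R^{m-n}$, of dimension $m-n$.

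I would next apply Bochner's theorem to produce $K$-equivariant coordinates at $x$ in which the $K$-fixed set is the linear subspace $(T_xM)^K$, hence a $(m-n)$-dimensional submanifold in the non-trivial case. To promote this to the full $G$-fixed set locally, I would invoke a Lie-algebraic observation: by the Cartan decomposition $\mathfrak{sl}(n,\R) = \mathfrak{so}(n) \oplus \mathfrak{p}$ and the irreducibility of the adjoint $\mathfrak{so}(n)$-action on the space $\mathfrak{p}$ of symmetric traceless matrices, any Lie subalgebra of $\mathfrak{sl}(n,\R)$ containing $\mathfrak{so}(n)$ equals either $\mathfrak{so}(n)$ or $\mathfrak{sl}(n,\R)$. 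Since $\dim G/K = (n-1)(n+2)/2 > 2n-3 \geq m$ for all $n \geq 3$, no $G$-orbit in $M$ can have identity component of stabilizer equal to $K$, so for every $y$ in the local $K$-fixed set the stabilizer $G_y$ must be all of $G$. Hence $F$ coincides with the $K$-fixed set in a neighborhood of $x$.

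The main obstacle is ruling out the possibility that $T_xM$ is trivial at some fixed point $x$, since in that case the above would only give a locally $m$-dimensional fixed set. In the real-analytic setting this follows immediately from Guillemin--Sternberg (Theorem~\ref{thm2.3.}): a trivial isotropy representation forces $G$ to act trivially in an analytic chart, and analytic continuation on the connected manifold $M$ extends this to the whole action, contradicting non-triviality. For the smooth case, lacking such a linearization, I would use a topological argument on $F_1 := \mathrm{int}(F)$. The previous stabilizer discussion shows that any fixed point with trivial $T_xM$ lies in $F_1$, while non-triviality of the action rules out $F_1 = M$. If $F_1$ were non-empty, its boundary $\partial F_1 \subseteq F$ would be non-empty, and any $y \in \partial F_1$ would have either trivial $T_yM$ (placing $y$ in $F_1$, contradicting $y \in \partial F_1$) or non-trivial $T_yM$, which would make $F$ locally $(m-n)$-dimensional at $y$, yet every neighborhood of $y$ meets the open set $F_1 \subseteq F$ --- a contradiction since $m - n < m$. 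Therefore $F_1 = \emptyset$, and the preceding local structure makes $F$ a closed $(m-n)$-dimensional submanifold of the compact manifold $M$, with only finitely many connected components.
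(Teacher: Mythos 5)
Your proof is correct and follows the same overall skeleton as the paper's: linearize the maximal compact $K=\SO$ at a fixed point via Bochner's theorem, identify the isotropy representation as (standard or dual) plus trivial so that the local $K$-fixed set is $(m-n)$-dimensional, show that $K$-fixed points are actually $G$-fixed, and conclude using closedness and compactness. Two sub-steps are handled genuinely differently. For promoting $K$-fixed to $G$-fixed, the paper invokes the orbit classification (Theorem \ref{thm3.1.}): stabilizers of non-fixed points are conjugate into parabolic subgroups, which contain no copy of $K$. You instead observe that any Lie subalgebra of $\mathfrak{sl}(n,\R)$ containing $\mathfrak{so}(n)$ equals $\mathfrak{so}(n)$ or all of $\mathfrak{sl}(n,\R)$ (Cartan decomposition plus irreducibility of $\mathrm{Sym}^2_0\R^n$ as an $\SO$-module), and rule out the first option because $\dim G/K=(n-1)(n+2)/2>m$; this is self-contained, elementary, and removes the proposition's dependence on the orbit classification. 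For ruling out trivial isotropy at a fixed point, the paper argues that trivial $K$-isotropy forces $K$ to act trivially near $x$, hence on all of $M$ (fixed sets of isometric compact-group actions are totally geodesic), hence $G$ acts trivially by simplicity; your interior/boundary argument on $\mathrm{int}(F)$ reaches the same conclusion, works uniformly in the smooth category, and makes your separate appeal to Guillemin--Sternberg in the analytic case correct but unnecessary. Both routes are valid; yours trades the paper's reliance on earlier structural results for slightly longer but more elementary local arguments. (Note that both you and the paper use compactness of $M$ for the finiteness of components, even though the statement as written only assumes connectedness; this is consistent with the paper's standing assumption in Section \ref{s3.2} that $M$ is closed.)
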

\begin{proof}
    Let $x\in M$ be a $G$-fixed point. If the isotropy representation of $G$ at $x$ is trivial, then actions of the maximal compact subgroup $K$ are trivial in a neighborhood of $x$ via the exponential map of $\kappa$. Then the $K$-action is trivial on all of $M$, and so is $G$. 
    
    Thus, the isotropy representation of $G$ at $x$ is nontrivial. By Theorem \hyperref[thm2.2.]{2.2}, any $K$-action on a neighborhood at $K$-fixed points is linearizable via the exponential map with respect to the metric $\kappa$. This representation can be identified with the standard representation of $K$, which is one of the canonical representations in $\R^m$. Under this representation, the set of $K$-fixed points has dimension $(m-n)$ where $3\leq n\leq m\leq 2n-1$. 
    
    We now claim that the set of $K$-fixed points coincides with the set of $G$-fixed points. Suppose not, then there exists a $K$-fixed point $y$ that is not fixed under the $G$-action. In this case, the stabilizer $G_y$ of $y$ contains $K$. However, by the orbit classification theorems in Section \hyperref[s3.1]{3.1}, where $3\leq n\leq m\leq2n-3$, the point stabilizer is conjugate to parabolic subgroups or its subgroups, none of which can contain a subgroup isomorphic to $K$. This contradiction establishes the claim.
    
    Since $K$ acts by isometries, the fixed-point set forms closed, totally geodesic submanifolds with respect to $\kappa$. Moreover, since $M$ is compact, the fixed-point set consists of a finite union of connected components. 
\end{proof}

\begin{rmk}\label{rmk3.6.}
    There are two nontrivial irreducible $\SL$-representations on $\R^n$, up to conjugation: the standard representation $\rho$, and the dual representation, defined by $\rho^*(g)=\rho((g^{-1})^t)$. Under $\rho$, there is a $Q$-invariant line, whereas there is no $Q$-invariant line under $\rho^*$ for $n\geq3$. For $3\leq n<m\leq 2n-1$, the only nontrivial $\SL$-representations on $\R^m$ are $\rho\oplus\mathrm{id}$ or $\rho^*\oplus\mathrm{id}$, up to conjugation, where $\mathrm{id}$ denotes the trivial representation. 
\end{rmk}

\section{Constructions}\label{s4}
In this section, we construct two families of real-analytic nontrivial actions of $G=\SL$ on closed manifolds $M$ of dimension $m$, for $3\leq n\leq m\leq2n-3$. In Section \hyperref[s5]{5}, we will prove that every nontrivial and non-transitive real-analytic action of $G$ on closed manifolds $M$ is conjugate to one of these constructions. When $n=m$, our construction is equivalent to the one by Fisher--Melnick \cite{FM}.

Recall that a maximal parabolic subgroup $Q<G$, defined as the stabilizer of a line in the standard representation of $\R^n$, is isomorphic to $\GL\ltimes\R^{n-1}$. 
Let $\{a^t\}$ denote the one-parameter subgroup generating the identity component of the center of a Levi subgroup $L\cong\GL$ of $Q$. Let $C\cong\mathrm{O}(n-1)$ be the maximal compact subgroup of $L$. Denote by $\pi$ the projection map from $Q$ to $L$, and let
$\sigma\in C$ be an involution projecting to $-1$ in $\R^*\cong L/[L,L]$. Define the following homomorphisms: 
$$\nu_0:Q^0\rightarrow\R, ~ q\mapsto \log(\det(\pi(q))),$$
$$\nu:Q\rightarrow \R^*\cong\Z_2\times\R, ~ q\mapsto (\mathrm{sgn}(\det(\pi(q))), \log|\det(\pi(q))|)$$
where $Q^0$ is the identity component of $Q$. Let a subgroup $E$ of $Q$ be the kernel of $\nu$. Then the kernel of $\nu_0$ coincides with the identity component $E^0$ of $E$. Note that $E^0$ is a connected normal subgroup of $Q$ isomorphic to $\mathrm{SL}(n-1,\mathbb{R)} \ltimes\R^{n-1}$.

\medskip

Let $Q_2<G$ be a maximal parabolic subgroup defined as the stabilizer of a plane in the standard representation of $\R^n$. By the Levi decomposition, it is isomorphic to $$(A\times B)\ltimes\R^{2n-2}:=(\mathrm{GL}(2,\R)\times\mathrm{GL}(n-2,\R))\ltimes\R^{2n-2}$$ where the Levi subgroup $L_2=\mathrm{GL}(2,\R)\times\mathrm{GL}(n-2,\R)$ has determinant one.
Let $\{a^t_2\}$ denote the one-parameter subgroup generating the identity component of the center of the Levi subgroup. Let $C_2\cong\mathrm{O}(2)\times\mathrm{O}(n-2)$ be a maximal compact subgroup of $L_2$.

There are exactly two codimension-one subgroups of $Q_2$: One is $(\mathrm{SL}(2,\R)\times\mathrm{SL}(n-2,\R))\ltimes\R^{2n-2}$, and the other is a parabolic subgroup with diagonal blocks of sizes $1\times1$, $1\times1$, and $(n-2)\times(n-2)$. Let $\pi_2$ denote the projection map that sends an element $q\in Q_2$ to its upper left $2\times 2$ minor, so that $\pi_2(q)\in A\cong\mathrm{GL}(2,\R)$. Let
$\sigma_2\in C_2$ be an involution element that projects to $-1$ in $\R^*$ which corresponds to the center of the Levi subgroup $L_2$. We now define a homomorphism:
$$\nu_2:Q_2\rightarrow\R^*\cong\Z_2\times\R, q\mapsto(\mathrm{sgn}(\mathrm{det}(\pi_2(q))), \mathrm{ln}|\mathrm{det}(\pi_2(q))|).$$
Note that the kernel of $\nu_2$ defines a codimension-one subgroup of $Q_2$ which is diffeomorphic to $(\mathrm{SL}(2,\R)\times\mathrm{SL}(n-2,\R))\ltimes\R^{2n-2}$.

\subsection{Construction 1: Without global fixed points}\label{s4.1} We present two examples of $G$-actions on closed manifolds $M$ without fixed points. The second example, Construction 1-(2), applies only to manifolds of dimension $m=2n-3$.

\subsubsection{Construction 1-(1)}\label{s4.1.1}
Let $\Sigma^0$ be a real analytic closed connected manifold of dimension $m-n+1$. Let $\{\psi^t_X\}$ be a real analytic flow on $\Sigma^0$, generated by a vector field $X$. Let $\tau$ be a real analytic involution either on $\Sigma^0$ commuting with $X$ or on $\Sigma^0 \times\{-1,1\}$ exchanging the two components. Let $\Sigma =\Sigma^0$ in the first case and $\Sigma^0\times\{-1,1\}$ in the second. In the second case, extend $X$ to $\Sigma$ by pushing it forward via $\tau$ to the other component.

Consider an action of $\R^*$ on $\Sigma$ as follows: $$t\mapsto\psi^{\log|t|}_X\circ\tau^{(1-\mathrm{sgn}(t))/2}.$$ Composing with the map $\nu : Q \rightarrow \R^*$, we obtain an induced action of $Q$ on $\Sigma$, which we denote by $\mu_{X,\tau}$. 
Consequently, the suspension space of the action 
$\mu_{X,\tau}$ is given by $$M=G\times_Q\Sigma$$ which is a closed $m$-dimensional manifold equipped with a real-analytic action of $G$.

\subsubsection{Construction 1-(2)}\label{s4.1.2}
Let $\Sigma^0_2$ be a real-analytic circle, and let $\{\psi^t_{2,X}\}$ be a real-analytic flow on $\Sigma^0_2$ generated by a vector field $X$. Let $\tau_2$ be a real-analytic involution that either acts on $\Sigma^0_2$ commuting with $X$, or acts on $\Sigma^0_2\times\{-1,1\}$ exchanging the two components. Let $\Sigma_2 =\Sigma^0_2$ in the first case and $\Sigma^0_2\times\{-1,1\}$ in the second. In the second case, extend $X$ to $\Sigma_2$ by pushing it forward to the second component via $\tau_2$.

Now define an action of $\R^*$ on $\Sigma_2$ by $$t\mapsto\psi^{\log|t|}_{2,X}\circ\tau_2^{(1-\mathrm{sgn}(t))/2}.$$ Composing this with a homomorphism $\nu_2 : Q_2 \rightarrow \R^*$, we obtain an induced action of $Q_2$ on $\Sigma_2$, denoted by $\mu_{2,X,\tau}$. The corresponding suspension space is then given by $$M=G\times_{Q_2} \Sigma_2$$ which defines a closed manifold of dimension $m=2n-3$, equipped with a real-analytic action of $G$.

\subsection{Construction 2: With global fixed points} \label{s4.2}
We now construct actions of $G$ on $m$-dimensional closed manifolds that contain global $G$-fixed points. As established in the Proposition \hyperref[prop3.5.]{3.5}, the set of global fixed points is a finite union of ($m-n$)-dimensional closed submanifolds of $M$.

Let $\Sigma^0$ be a real analytic, connected, closed $(m-n+1)$-dimensional manifold. Let $\Sigma^0$ contain the global fixed points set which is a nonempty, finite union of closed $(m-n)$-dimensional submanifolds, denoted by $F\subset\Sigma^0$. Each connected component of $\Sigma^0\backslash F$ forms a manifold with boundaries. We denote by $\Sigma^0_\alpha$ for each connected component.

Define $U_{\alpha\beta}$ to be a collar neighborhood of the boundary of each closure $\overline{\Sigma^0_\alpha}$ identified up to diffeomorphism with $[0, 1) \times V_{\alpha\beta}$, where $V_{\alpha\beta}=U_{\alpha\beta}\cap F$. 

Consider a real analytic vector field $X$ on $\Sigma^0$ that vanishes on $F$, and remains nonvanishing on each collar neighborhood $U_{\alpha\beta}\cong[0,1)\times V_{\alpha\beta}$ away from $\{0\}\times V_{\alpha\beta}\subset F$. Moreover, in collar coordinates $(t,x)\in[0,1)\times V_{\alpha\beta}$ (identified with $U_{\alpha\beta}$ via a diffeomorphism $f$ satisfying $f(0,x)=x$ for all $x \in V_{\alpha\beta}$), the vector field $X$ satisfies $$D_tX(t,x)|_{t=0}=1$$ for all $x\in V_{\alpha\beta}$ where $D_t$ denotes the directional derivatives in the $[0,1)$ direction.

Define a $Q^0$-action on $\Sigma^0\backslash F$ by letting the one-parameter subgroup $\{a^t\}$ of $Q^0$ act via the flow $\{\psi_{X}^t\}$ and then pulling it back through the epimorphism $\nu_0 : Q^0 \rightarrow \R^*_{>0}$. This action yields the space $$M' = G \times_{Q^0} (\Sigma^0\backslash F)$$ obtained as the suspension of the $Q^0$-action on $\Sigma^0\backslash F$.

Now consider the $G$-action on $\R^n \times F$ where $G$ acts in the standard way on $\R^n$ and trivially on $F$. For each $x\in F$, let $l_0\subset \R^n\times \{x\} \cong \R^n$ be a $Q^0$-invariant ray from the origin that is pointwise fixed by $E^0$. Then the restriction of $\{a^t\}$ to $l_0$ corresponds to the flow $\{\psi_{X}^t\}$ on the image of $f$ at $[0, 1)\times \{x\}$ in the collar neighborhood $U_{\alpha\beta}$ containing $x$.

By identifying the equivalence described above and extending it $G$ equivariantly, we obtain a real-analytic gluing of $(\R^n, 0) \times F$ to $M'$ along the boundaries of $\Sigma^0\backslash F$, resulting in a closed manifold $M$ of dimension $m$. The group $G$ acts on $M$ real-analytically and faithfully with a nonempty set of global fixed points.

\section{Proof of the Main Theorems} \label{s5}
In this section, we prove the main Theorems \hyperref[thmA]{1.1} and \hyperref[thmB]{1.2} which assert that all nontrivial actions of $\SL$ on manifolds arise from the constructions given in the previous Section \hyperref[s4]{4}. Before turning to the proofs of these results, we first consider closed submanifolds that remain invariant under the action of specific compact subgroups.

\begin{prop}\label{prop5.1.}
    Consider nontrivial actions of $G=\SL$ on a closed m-dimensional manifold $M$, where $3\leq n\leq m\leq2n-3$. Assume that $M$ only contains the standard orbit types (1), (2), or (3) which are described in Theorem \hyperref[thm3.1.]{3.1}. Then the fixed point set $\Sigma$ of the action of $C^0\cong\SOone$ is a nonempty, finite union of closed submanifolds of dimension $m-n+1$. 
\end{prop}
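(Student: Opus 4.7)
The plan is to combine Bochner's linearization theorem (Theorem~\ref{thm2.2.}) for the compact group $C^0$ with the orbit classification in Theorem~\ref{thm3.1.} and a pointwise count of dimensions of $C^0$-fixed tangent subspaces. First I would average any Riemannian metric on $M$ over $K\cong\SO$ to obtain a $K$-invariant metric $\kappa$; since $C^0\subset K$ acts by $\kappa$-isometries, $\Sigma$ is closed and, by Theorem~\ref{thm2.2.}, near each $x\in\Sigma$ it coincides with the $\kappa$-exponential image of $(T_xM)^{C^0}$. Hence $\Sigma$ is a closed embedded submanifold, and compactness of $M$ gives finitely many components. Writing $C^0=\{\mathrm{diag}(1,B):B\in\SOone\}$ in the block form of the Levi of $Q$, direct inspection shows $C^0$ fixes $\pm e_1\in\Snone$, $[e_1]\in\RP$, and the line $\R e_1$ inside a type (3) orbit; nontriviality of the action yields an orbit of type (2) or (3), so $\Sigma\neq\emptyset$.

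It remains to show $\dim(T_xM)^{C^0}=m-n+1$ at every $x\in\Sigma$, which I would prove by splitting on the orbit type at $x$. For a type (1) orbit, Theorem~\ref{thm2.3.} linearizes the $G$-action and Remark~\ref{rmk3.6.} identifies $T_xM$ with $\rho\oplus\mathrm{id}^{m-n}$ or $\rho^*\oplus\mathrm{id}^{m-n}$; both restrict on $C^0$ to a representation with one-dimensional fixed subspace in the $n$-dimensional summand, giving dimension $1+(m-n)=m-n+1$. For a type (2) or (3) orbit $\cO_x$, decompose $T_xM=T_x\cO_x\oplus N_x$ $\kappa$-orthogonally; the block form yields $\dim(T_x\cO_x)^{C^0}$ equal to $0$ (type (2)) or $1$ (type (3)), so $\dim N_x$ equals $m-n+1$ or $m-n$, and the remaining task is to prove $C^0$ acts trivially on $N_x$.

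This triviality is the main obstacle. I would invoke the slice theorem, which identifies a neighborhood of $\cO_x$ with $G\times_{H_x}N_x$, together with the identity $\dim G\cdot[e,v]=\dim\cO_x+\dim H_x\cdot v$, to conclude from the orbit hypothesis that every $H_x$-orbit on a small ball in $N_x$ has dimension at most $1$ (in fact $0$ in the type (3) case, which immediately gives triviality of $H_x=E^0$ on $N_x$). In the type (2) case, since $\mathrm{SL}(n-1,\R)$ is simple with no codimension-one proper Lie subalgebra for $n\geq 3$, any linear representation whose $\mathrm{SL}(n-1,\R)$-orbits all have dimension at most $1$ is trivial; because the normal closure of $\mathrm{SL}(n-1,\R)$ inside $E^0\cong\mathrm{SL}(n-1,\R)\ltimes\R^{n-1}$ is all of $E^0$, the subgroup $E^0$ must act trivially on $N_x$. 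The resulting $H_x$-representation then factors through $Q/E^0\cong\R^*$, and since $C^0\subset E^0$ (the image of $C^0$ under $\det$ is trivial), $C^0$ acts trivially on $N_x$. Summing contributions yields $\dim(T_xM)^{C^0}=m-n+1$ everywhere, completing the proof.
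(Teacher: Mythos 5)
Your overall architecture is the same as the paper's: average a metric over $K$ so that $\Sigma$ is a finite union of closed totally geodesic submanifolds, get nonemptiness from the orbit list, and compute $\dim(T_xM)^{C^0}$ orbit type by orbit type after splitting $T_xM=T_x\cO_x\oplus N_x$. You are in fact more explicit than the paper about the one step that genuinely needs an argument, namely why $C^0$ acts trivially on the normal space $N_x$ along a type (2) or (3) orbit; the paper simply asserts this. But the tool you reach for there is not available.

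The gap is the appeal to the slice theorem. The differentiable slice theorem (Koszul, Palais) requires the action to be proper, or at least the stabilizer to be compact; here $G=\SL$ is noncompact acting on a compact $M$, so the action is never proper, and $H_x$ contains a conjugate of $Q^0$ or $E^0$, which are noncompact. Hence the identification of a neighborhood of $\cO_x$ with $G\times_{H_x}N_x$, the identity $\dim G\cdot[e,v]=\dim\cO_x+\dim H_x\cdot v$, and the resulting bound on $H_x$-orbit dimensions in $N_x$ are all unjustified. (A secondary slip: $\mathfrak{sl}(n-1,\R)$ \emph{does} have a codimension-one proper subalgebra when $n=3$, namely the Borel of $\mathfrak{sl}(2,\R)$, so that step of your type (2) argument fails for $n=3$.) Fortunately the conclusion you want follows without any slice, from the linear isotropy representation alone: $G_x$ acts linearly on $T_xM$ preserving $T_x\cO_x$, hence on the quotient $T_xM/T_x\cO_x$, whose dimension is at most $m-(n-1)\leq n-2$ in the type (2) case and at most $m-n\leq n-3$ in the type (3) case. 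The stabilizer $G_x$ contains a conjugate of $E^0\cong\mathrm{SL}(n-1,\R)\ltimes\R^{n-1}$, which in turn contains $C^0$ (any compact connected subgroup of $G_x$ lies in the kernel of $\nu$). Since $\mathrm{SL}(n-1,\R)$ is simple, any nontrivial real representation of it is faithful of dimension at least $n-1$, so for $n\geq4$ it acts trivially on the quotient, and then your normal-closure observation forces all of $E^0$ to act trivially; for $n=3$ the quotient has dimension at most $1$ and $E^0$ is perfect, so the same conclusion holds. As $C^0$ is compact and preserves $\kappa$, the $\kappa$-orthogonal complement $N_x$ is $C^0$-isomorphic to the quotient, giving $\dim(T_xM)^{C^0}=\dim(T_x\cO_x)^{C^0}+\dim N_x=m-n+1$ in both cases. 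The rest of your proof (nonemptiness, the Bochner argument, the fixed-point case via Remark 3.6) is sound, though at a $G$-fixed point you only need the linear isotropy representation, not the analytic linearization of Theorem 2.3.
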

\begin{proof}
    By the orbit classification in Theorem \hyperref[thm3.1.]{3.1}, every standard $G$-orbit contains points fixed by $C^0$, so the fixed-point set $\Sigma$ is nonempty. Let $\kappa$ be a $K$-invariant metric on the manifold $M$, where $K$ is the maximal compact subgroup of $G$ containing $C^0$. Since compact group actions preserve $\kappa$, $\Sigma$ forms a finite union of closed, totally geodesic submanifolds with respect to the Riemannian metric $\kappa$. It remains to show that each connected component of $\Sigma$ has dimension $m-n+1$.
    
    Let $x\in\Sigma$. Assume that $x$ is fixed under the $G$-action. Then the restricted $K$-action is linearizable in a neighborhood of $x$. The isotropy representation of $K$ at $x$ extends to a representation of $G$, corresponding to the standard representation of $G$ on $\R^m$. Under this representation, the fixed set of $C^0$ has dimension $m-n+1$.

    Now suppose that $x$ is not fixed by the $G$-action. By assumption, the orbit $\cO_x$ can only be of two types; $(n-1)$-dimensional or $n$-dimensional homogeneous space. 
    
    If $\cO_x$ is a $(n-1)$-dimensional homogeneous space which is conjugate to $\Snone$ or $\RP$, then $C^0$ acts irreducibly on $T_x\cO_x$ and trivially on its $\kappa$-orthogonal complement. In this case, $\Sigma$ corresponds locally to a $\kappa$-geodesic submanifold orthogonal to $\cO_x$ at $x$, which has dimension $m-n+1$. 
    
    If $\cO_x$ has dimension $n$, then the $G$-action on this orbit $\cO_x$ is conjugate to the linear action on $\RO$, as described by Cairns and Ghys in \cite[Theorem 4.1]{CG}. The fixed-point set of $C^0$ within $\cO_x$ has dimension 1. As in the previous case, $C^0$ acts trivially on the $\kappa$-orthogonal complement of $T_x\cO_x$. Therefore, the $C^0$-fixed set in a neighborhood of $x$ is also of dimension $m-n+1$. 
\end{proof}

When the manifold $M$ contains an orbit conjugate to $\Gr$, it is necessary to consider other types of closed submanifolds.

\begin{prop}\label{prop5.2.}
    Let $G\cong\SL$ act nontrivially on a closed manifold $M$ of dimension $2n-3$ that contains an orbit conjugate to $\Gr$. Then the fixed point set $\Sigma_2$ of the $C^0_2\cong\mathrm{SO}(2)\times\mathrm{SO}(n-2)$-action is a nonempty finite union of circles.
\end{prop}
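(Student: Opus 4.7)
The plan is to parallel the proof of Proposition 5.1. I would start by fixing a $K$-invariant Riemannian metric $\kappa$ on $M$, where $K=\SO$ is the maximal compact subgroup of $G$; such a metric exists by averaging. Since $C_2^0\subset K$ acts by $\kappa$-isometries, the fixed-point set $\Sigma_2$ is a (possibly disconnected) closed totally geodesic submanifold of $M$, and compactness of $M$ forces finitely many components. It therefore suffices to verify nonemptyness and that every component has dimension one; a closed connected one-manifold without boundary is then automatically a circle.

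Nonemptyness is immediate from the hypothesis. On the orbit conjugate to $\Gr$ the $G$-action is the standard transitive one, so the stabilizer of the $2$-plane $\mathrm{span}(e_1,e_2)$ equals $Q_2$, which contains $C_2^0\cong\mathrm{SO}(2)\times\mathrm{SO}(n-2)$ as the identity component of the maximal compact of its Levi factor. Hence this $2$-plane is a $C_2^0$-fixed point of $M$, and $\Sigma_2\neq\emptyset$.

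For the dimension count, I would fix $x\in\Sigma_2$ and linearize the $C_2^0$-action near $x$ via the exponential map of $\kappa$, so that a neighborhood of $x$ in $\Sigma_2$ is identified with the $C_2^0$-fixed subspace of $T_x M$. I then run through the possibilities for the $G$-orbit of $x$ given by Theorem 3.1 (the case of a global $G$-fixed point does not arise in the setting in which the proposition will be applied). Orbits of types (2) and (3) sit inside a linear representation on $\R^n$ on which $C_2^0\subset\SO$ has trivial fixed subspace for $n\geq 4$, so no such orbit contributes to $\Sigma_2$. For orbits of type (6), any $C_2^0$-invariant $2$-plane $V_2\subset\R^n$ is one of the finitely many coordinate $2$-planes, and no such $V_2$ admits a $C_2^0$-invariant line inside it since $\mathrm{SO}(2)$ has no invariant line in $\R^2$; so this orbit too contains no $C_2^0$-fixed flags. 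The exceptional $n=4$ orbit of type (7) is excluded by Remark 3.4. On a $\Gr$-orbit the tangent space $T_x\cO_x\cong\mathfrak{g}/\mathfrak{q}_2\cong\R^2\otimes\R^{n-2}$ is an irreducible $C_2^0$-representation with no nonzero fixed vector, while the one-dimensional $\kappa$-normal direction is fixed by connectedness of $C_2^0$, giving $\dim_x\Sigma_2=1$. Finally, on a type (5) orbit, the homomorphism $\nu_2$ restricts trivially to $C_2^0$ (every element of $\mathrm{SO}(2)$ has determinant one), so $C_2^0$ acts trivially on the $\R$-fiber of the suspension, and the same calculation on the base $\Gr$ again yields a one-dimensional fixed set.

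Putting these together, $\Sigma_2$ is nonempty, compact, and of pure dimension one, hence a finite disjoint union of circles. The main step that requires genuine care is the analysis of type (4): one must identify the isotropy $C_2^0$-representation on $T_x\cO_x$ with the standard $\mathrm{SO}(2)\times\mathrm{SO}(n-2)$-action on $\R^2\otimes\R^{n-2}$ and show it has no nonzero fixed vector, which I would handle by the isotypic decomposition under the first factor followed by an invariance argument under the second. The ruling out of the sphere and flag orbits is a short coordinate calculation once $C_2^0$ is placed in its block-diagonal form inside $\SO$.
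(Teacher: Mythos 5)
Your proposal is correct and follows essentially the same route as the paper: nonemptiness from the $C^0_2$-fixed $2$-plane on the $\Gr$-orbit, total geodesy of the fixed set of a compact group acting by $\kappa$-isometries, and a dimension count via the isotropy representation (irreducible, or at least with trivial fixed subspace, tangent to $\Gr$ and trivial on the one-dimensional normal direction). Your case-by-case check over all orbit types of Theorem \hyperref[thm3.1.]{3.1} is in fact more complete than the paper's own proof, which only carries out the dimension count at points lying on a $\Gr$-orbit.
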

\begin{proof}
    Since the $\Gr$ orbit contains a fixed point of the restricted $C^0_2$-action, the fixed set $\Sigma_2$ is nonempty. As the fixed point set of a compact group action, $\Sigma_2$ is a finite union of closed, totally geodesic submanifolds with respect to the Riemannian metric $\kappa$, as in the preceding proposition. The fact that $\Sigma_2$ has dimension $1$ follows from the observation that, for $x\in\Sigma_2$, the $C^0_2$-action is irreducible on $T_x\Gr$ and trivial on its $\kappa$-orthogonal complement. 
\end{proof}

\subsection{$G$-action on manifolds without global fixed points} \label{s5.1}
If the $G$-action on the manifold admits no global fixed points, the following theorem shows that the actions are realized as suspensions of actions by maximal parabolic subgroups on submanifolds. Moreover, when $m\neq2n-3$, only Case (1) in the following theorem can occur.  
\thmA* \label{thm1.1.}

\begin{proof}
    When the $G$-action is not transitive, the manifold $M$ either contains an orbit conjugate to $\Gr$ or it does not. Accordingly, the proof is completed together with the following Lemmas \hyperref[lem5.3.]{5.3} and \hyperref[lem5.4.]{5.4} which will be proved below. In fact, when the dimension $m<2n-4$, only the first case can occur, since $\Gr$ cannot appear as an orbit in this dimension range. 

    It also applies to smooth $G$-actions on $M$ rather than real-analytic ones. If we begin with a smooth $G$-action, then the vector fields $X$ and involutions $\tau$ in the proofs will also be smooth, and the resulting $Q$ or $Q_2$-actions on $G\times \Sigma$ or $G\times \Sigma_2$ will be smooth as well. Consequently, the map $\Phi$ or $\Phi_2$ defines a smooth diffeomorphism between $M$ and $G\times_Q\Sigma$ or $G\times_{Q_2}\Sigma_2$.
\end{proof}

\begin{lem} \label{lem5.3.}
    Under the same setting and assumptions in Theorem \hyperref[thm1.1.]{1.1}, and assuming there are no orbits conjugate to $\Gr$, only case (1) of Theorem \hyperref[thm1.1.]{1.1} can occur.
\end{lem}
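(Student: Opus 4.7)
The plan is to first eliminate all non-standard orbit types and then identify a $Q$-invariant submanifold $\Sigma \subset M$ realizing $M$ as an associated bundle $G \times_Q \Sigma$. By Theorem 3.1 the possible $G$-orbit types are (1)--(7): type (1) is excluded by the no-fixed-point hypothesis, and type (4) by the standing hypothesis of this lemma, so it remains to rule out (5), (6), and (7). Each of these orbits has dimension exactly $2n-3 \geq m$, and so, if present in $M$, is open. For types (6) and (7) and for the compact quotient version of (5), the orbit is compact and hence closed, so it exhausts the component of $M$ containing it, forcing the action to be transitive; this contradicts the non-transitivity branch of Theorem 1.1 in which this lemma is invoked. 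For the non-compact form of (5), any limit point in $M \setminus \cO$ lies on a strictly lower-dimensional orbit, which by Theorem 3.1 must be conjugate to $\Gr$ or to a global fixed point --- both excluded by the hypotheses --- so again $\cO$ is closed and the action is transitive, contradicting non-transitivity. Thus every $G$-orbit in $M$ is of type (2) or (3).

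With only standard orbits present, Proposition 5.1 supplies $\Sigma := \mathrm{Fix}(C^0)$ as a nonempty closed submanifold of $M$ of uniform dimension $m-n+1$. I would next show that $Q$ preserves $\Sigma$ and acts on it through $\nu \colon Q \twoheadrightarrow \R^*$. For the invariance: given $s \in \Sigma$, the stabilizer $\mathrm{Stab}_G(s)$ equals $Q$, $Q^0$, $E$, or the preimage $E_\Lambda := \nu^{-1}(\Lambda)$, depending on the type of the orbit through $s$. Each of these subgroups is either self-normalizing in $Q$ or normal in $Q$ (since $Q^0$ is the identity component, $E = \ker\nu$, and $E_\Lambda$ is a preimage), so for every $q \in Q$ the conjugate $q^{-1} C^0 q$ stays inside $\mathrm{Stab}_G(s)$, proving $qs \in \Sigma$. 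For the factoring through $\R^*$, a direct computation shows that $E$ fixes the $C^0$-fixed locus $\{[e_1]\}$, $\{\pm e_1\}$, $\R^* e_1$, or $\R^* e_1/\Lambda$ pointwise on each orbit, because elements of $E$ satisfy $\det \pi(q) = 1$. The resulting $\R^* = \Z_2 \times \R_{>0}$-action on $\Sigma$ decomposes into a real-analytic flow generated by the lift of $\{a^t\}$ and a real-analytic involution induced by any $\sigma \in Q$ with $\nu(\sigma) = -1$, exactly the data of Construction 1-(1).

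Finally, the evaluation map $\Phi \colon G \times \Sigma \to M$, $(g,s) \mapsto g \cdot s$, is invariant under the right action $(g,s) \cdot q = (gq, q^{-1} \cdot s)$, so it descends to a $G$-equivariant real-analytic map $\bar\Phi \colon G \times_Q \Sigma \to M$. Surjectivity holds because $\Sigma$ meets every $G$-orbit; dimensions match since $\dim G - \dim Q + \dim \Sigma = (n^2 - 1) - n(n-1) + (m - n + 1) = m$. For bijectivity I would argue orbit-by-orbit that $gs = g' s'$ with $s, s' \in \Sigma$ forces $g'^{-1} g \in Q$, which reduces to $\mathrm{Stab}_G(s) \subseteq Q$ and to the fact that $Q$ already acts transitively on each $\Sigma \cap \cO$. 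To upgrade this to a diffeomorphism I would invoke $G$-equivariance: it suffices to check that $d\bar\Phi$ is invertible at one representative per $G$-orbit, and at a $C^0$-fixed representative the source tangent space splits as $T_{eQ}(G/Q) \oplus T_s \Sigma$ and maps isomorphically onto the transversal decomposition $T_s \cO \oplus T_s^\perp \cO = T_s M$ furnished by Proposition 5.1. Since every step above only uses the restriction of the ambient real-analytic $G$-action, the argument is unchanged in the smooth category.

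The main technical obstacle I anticipate is the closure argument excluding the non-compact type (5) orbit in the critical case $m = 2n-3$: one must verify that within the compact manifold $M$ the only possible boundary orbits of a $G \times_{Q_2} \R$-type orbit are $\Gr$ or a global fixed-point orbit, which requires controlling the possible degenerations of the stabilizer $\ker \nu_2$ through the orbit classification in Theorem 3.1. A secondary subtlety is ensuring that the orbit-by-orbit description of the $Q$-action on $\Sigma$ yields a globally real-analytic flow and involution when $M$ simultaneously contains orbits of types (2) and (3); this is resolved by noting that the $Q$-action on $\Sigma$ is, by definition, the restriction of the globally real-analytic ambient $G$-action, and the factoring through $\R^*$ is then automatic.
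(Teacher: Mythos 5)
Your overall strategy coincides with the paper's: take $\Sigma=\mathrm{Fix}(C^0)$ via Proposition 5.1, show $Q.\Sigma=\Sigma$ and that the $Q$-action factors through $\nu$, and prove that the evaluation map $G\times_Q\Sigma\to M$ is a $G$-equivariant diffeomorphism (your orbit-by-orbit injectivity argument replaces the paper's Iwasawa normal form, and your direct surjectivity argument replaces its open-and-closed argument; both substitutions are fine). However, there are two genuine gaps. The first is your exclusion of the non-compact orbit of type (5), $G\times_{Q_2}\R$. You assert that a boundary point of such an orbit ``by Theorem 3.1 must be conjugate to $\Gr$ or to a global fixed point,'' but Theorem 3.1 does not say this: orbits of types (2) and (3) have dimensions $n-1$ and $n$, which are also strictly less than $2n-3$ for $n\geq 4$, so they are not ruled out as boundary orbits by dimension. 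What is actually needed is an argument that the stabilizer of a boundary point must contain (a conjugate of) $C^0_2\cong\mathrm{SO}(2)\times\mathrm{SO}(n-2)$, and that no stabilizer of a type (2) or (3) orbit --- these are all conjugate into $Q$, whose maximal compact subgroup is $\mathrm{O}(n-1)$ --- can contain such a subgroup; only then are the boundary orbits forced to be $\Gr$ or fixed points, both excluded by hypothesis. You correctly flag this as the main obstacle, but the justification you sketch is exactly the missing step, not a resolution of it. (The paper sidesteps this by folding all type (5) orbits into the $\Gr$-containing case treated in Lemma 5.4, so making your preliminary reduction rigorous is genuinely extra work.)

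The second gap is your claim that for $s\in\Sigma$ the stabilizer $\mathrm{Stab}_G(s)$ \emph{equals} $Q$, $Q^0$, $E$, or $E_\Lambda$, rather than merely being conjugate to one of them. This identity is the linchpin of your whole second step: the normality of these subgroups in $Q$ gives $Q$-invariance of $\Sigma$ only if the stabilizer is literally one of them, and your injectivity argument explicitly reduces to $\mathrm{Stab}_G(s)\subseteq Q$. The orbit classification only provides the stabilizer up to conjugacy, so one must show the conjugating element can be taken in $Q$. The paper does this by observing that $C^0\leq\mathrm{Stab}_G(hQ)$ forces $hQ$ to be the unique $C^0$-fixed point of $G/Q\cong K/C$, using $N_K(C^0)=C$; some such argument is required and is absent from your proposal. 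Also, a small caution: ``self-normalizing in $Q$'' would not suffice for the invariance argument --- you need actual normality, which fortunately all of $Q$, $Q^0$, $E$, $E_\Lambda$ have, being preimages of subgroups of the abelian quotient $Q/E\cong\R^*$. With these two points repaired, the rest of your argument (transversality of $T_s\cO_s$ and $T_s\Sigma$, the dimension count, equivariance, and the carry-over to the smooth category) matches the paper and is sound.
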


\begin{proof} 
    Let $\Sigma$ be the set of fixed points of the restricted actions of $C^0$, as defined in Proposition \hyperref[prop5.1.]{5.1}. For each $x\in\Sigma$, the point stabilizer $G_x$ of $x$ contains $C^0$. Note that, by the orbit classification in Theorem \hyperref[thm3.1.]{3.1}, all point stabilizers are conjugate in $G$ into $Q$. Consequently, there exists $h \in G$ such that $h^{-1}G_xh \leq Q$. In particular, this implies that $h^{-1}C^0 h \leq Q$, and therefore, $$C^0 \leq \mathrm{Stab}_G(hQ).$$ Here, the stabilizer of the $G$-action acts by translation on the left-coset space $hQ$. Recall that the following homogeneous spaces are $K$-equivariantly diffeomorphic: $$G/Q \cong \mathbb{RP}^n \cong K/C,$$ where $K \cong \mathrm{SO}(n)$ and $C \cong \mathrm{O}(n-1).$ From this diffeomorphism, we obtain $$\mathrm{Stab}_G(hQ) \cap K =\mathrm{Stab}_K(h'C)$$ for some $h' \in K$. Since $h'\in N_K(C^0)=C$ where $N_K(C^0)$ is the normalizer of $C^0$ in $K$, it follows that $h' \in C$, and thus $h \in Q$. Therefore, we conclude that $G_x \leq Q$ for all $x \in \Sigma$. 
    
    By the orbit classification results again in Theorem \hyperref[thm3.1.]{3.1}, the point stabilizer $G_x$ for all $x \in \Sigma$ is one of the subgroups $E^0, E, Q^0$, or $Q$ which are introduced in Section \hyperref[s4]{4}. Since all these subgroups contain $C^0$, it follows that $Q.\Sigma=\Sigma$. Moreover, as the normal subgroup $E^0$ in $Q$ acts trivially on $\Sigma$, the $Q$-action on $\Sigma$ factors through the epimorphism $$\nu : Q \rightarrow \R^*$$ in Section \hyperref[s4]{4}. Let $\Sigma^0$ be a connected component of $\Sigma$. The $\R^*_{>0}$ preserves $\Sigma^0$; this action is an analytic flow $\{\psi^t_X\}$, which corresponds to the restriction of $\{a^t\}$. Let $\sigma$ be an analytic involution in $C$ that maps to $-1$ under $\nu$. This involution leaves $\Sigma$ invariant. Let $\tau = \sigma|_\Sigma$. Depending on $\tau$, we set $\Sigma = \Sigma^0$ or $\Sigma^0 \times \{-1, 1\}$. Let $\mu_{X, \tau}$ denote the corresponding $Q$-action on $\Sigma$. For this $Q$-action, we define the $G$-equivariant map $$\Phi : G \times_Q \Sigma \rightarrow M, ~~ [(g,x)] \mapsto g.x .$$ 
    
    Now we show $\Phi$ is a diffeomorphism. First, we prove that $\Phi$ is surjective by showing it is both open and closed. Then, we establish injectivity using the Iwasawa decomposition. The image of $\Phi$ is closed because the fiber product is compact. To show $\Phi$ is
    open, we show the differential map has full rank. If the orbit $\cO_x$ is $(n-1)$-dimensional, corresponding to $G_x = Q^0$ or $Q$, then $\cO_x$ is equivariantly diffeomorphic to $\RP$ or $\Snone$. Then $T_x\cO_x$ and $T_x\Sigma$ are transversal since the $C^0$-fixed set in $\cO_x$ is $0$-dimensional. Thus, the differential of $\Phi$ at $[(e, x)]$ is onto $T_xM$. 
    In the other case, the orbit $\cO_x$ has dimension $n$ and is equivariantly diffeomorphic to $\R^n\backslash\{0\}$. Once again, consider the decomposition of $G$-action on $\R^n\backslash\{0\}$ by the $G$-action on $(G \times \R^+_*)/Q^0$ as described by Cairns--Ghys, see \cite[Theorem 4.1]{CG}. This decomposition implies that the radial line defined by $\R^+_*$ can only be included in $\Sigma$. Thus, the intersection $T_x\cO_x \cap T_x\Sigma$ has rank one, and the differential of $\Phi$ at $[(e,x)]$ is also onto in $T_xM$. 
    By equivariance of $\Phi$, the differential at $[(g, x)]$ is onto in $T_{g.x}M$ for all $g \in G$. We conclude that $\Phi$ is open, so it is a surjective local diffeomorphism. 

    Recall that the Iwasawa Decomposition is a diffeomorphism $$K \times A \times N \rightarrow \SL$$ where $K \cong \SO$, $A$ is the identity component of the diagonal subgroup, and $N$ is the group of unipotent upper-triangular matrices. As $N < E^0$ and $A/(A \cap E^0) \cong \{a^t\}$, the Iwasawa Decomposition provides a normal form for elements of $G \times_Q \Sigma$. For any $g = ka'n \in G$ and $x \in \Sigma$, we have $$[(g,x)]=[(k,a^t.x)]=[(k\sigma,\tau a^t.x)],$$ where $a'=a^ta''$ with $a''\in A\cap E^0$. This means that every equivalence class $[(g, x)] \in G \times_Q \Sigma$ has a representative of the form $(k, x)$ for some $k \in K$ and $x \in \Sigma^0$.

    Now suppose that $\Phi([(k, x)]) = \Phi([(k', x')])$ with $k, k' \in K$ and $x, x' \in \Sigma^0$. Then $k' = kq$ and $x' = q^{-1}.x$ for $q = k^{-1}k'$. By $G/Q\cong K/C$, $q$ is contained in $C$ which is the normalizer of $C^0$. Thus $[(k, x)] = [(k', x')]$, confirming that $\Phi$ is injective. Therefore, $\Phi$ is a real analytic diffeomorphism, and this completes the proof.
\end{proof}

Let us examine the possible diffeomorphism types of the resulting manifolds for the previous theorem. If the involution $\tau$ has no fixed points on $\Sigma^0$, then there are only three possibilities for $\tau$, and the manifold $M$ can only be one of the following examples. For the case where the manifold $M$ has dimension $n$, the following examples were introduced in \cite{FM}.
\begin{enumerate}
    \item Suppose $\Sigma=\Sigma^0\times\{-1, 1\}$. Represent a point $p \in M\cong G\times_Q\Sigma$ by $[(k,x)]$ with $k \in K$ and $x \in \Sigma^0$. The assignment $p \mapsto (kC^0, x) \in K/C^0\times\Sigma^0$ is well defined because the stabilizer of $\Sigma^0$ intersect $K$ equals $C^0$ in this case. Then, this map is a real analytic diffeomorphism $M \rightarrow \Snone\times\Sigma^0$ with a faithful, fiber-preserving $G$-action.

    \item If $\Sigma=\Sigma^0$ and $\tau$ is trivial, then $M$ has a well-defined real analytic diffeomorphism to $K/C\times\Sigma^0 \cong \RP\times\Sigma^0$, with a fiber-preserving action factoring through $\PSL$.

    \item If $\Sigma=\Sigma^0$ and $\tau$ acts freely. In this case, the stabilizer in $K$ of $\Sigma^0$ is $C$; note that $C=\langle\sigma, C^0\rangle$ and $\sigma$ normalizes $C^0$. Given $p \in M$ corresponding to $[(k,x)]$ with $k \in K$ and $x \in \Sigma^0$, there is a well defined map to the orbit $\{(kC^0, x), (k\sigma C^0, \tau.x)\} \in (K/C^0 \times \Sigma^0)/\langle\sigma\rangle \cong (\Snone\times\Sigma^0)/\langle\sigma\rangle$. This case, $M$ is a flat bundle with $Z_2$ monodromy over $\RP$, with a faithful $G$-action.
\end{enumerate}

In cases where the involution $\tau$ has fixed points on $\Sigma^0$, the dimension of the set of $\tau$-fixed points may vary, leading to multiple possible cases. If this fixed points set is $0$-dimensional, the manifold can be constructed by a blow-up at those points, as discussed in \cite{FM}. However, there is no general formula for an arbitrary dimension of the $\tau$-fixed set. 

\begin{lem} \label{lem5.4.}
    Under the same setting and assumptions in Theorem \hyperref[thm1.1.]{1.1}, and assuming the manifold $M$ of dimension $2n-3$ contains orbits conjugate to $\Gr$, only case (2) of Theorem \hyperref[thm1.1.]{1.1} can occur.
\end{lem}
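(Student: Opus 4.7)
The plan is to mimic the argument of Lemma \hyperref[lem5.3.]{5.3} almost verbatim, replacing the data $(Q, C^0, \Sigma, \nu)$ with $(Q_2, C_2^0, \Sigma_2, \nu_2)$ from Section \hyperref[s4.1.2]{4.1.2}. By Proposition \hyperref[prop5.2.]{5.2}, the $C_2^0$-fixed set $\Sigma_2$ is a nonempty finite disjoint union of closed circles, and for every $x \in \Sigma_2$ the stabilizer $G_x$ contains $C_2^0 \cong \mathrm{SO}(2) \times \mathrm{SO}(n-2)$.

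The first step is to show $G_x \leq Q_2$ for all $x \in \Sigma_2$. Under the standing hypotheses---no global fixed point, and a $\Gr$-orbit present in $M$---the extended orbit classification Theorem \hyperref[thm3.1.]{3.1} leaves only the stabilizers $Q_2$, $E_2^0$, and $E_{\Lambda,2}$ viable through points of $\Sigma_2$: the stabilizers attached to orbit types (2), (3), and (6) do not contain any subgroup isomorphic to $\mathrm{SO}(2)\times\mathrm{SO}(n-2)$, since the maximal compacts of $Q$ and of the flag parabolic are $\mathrm{O}(n-1)$ and $\mathrm{O}(1)\times\mathrm{O}(1)\times\mathrm{O}(n-2)$ respectively, neither of which admits such an embedding on dimensional grounds. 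Thus $G_x$ is conjugate into $Q_2$; the same $K$-normalizer argument as in Lemma \hyperref[lem5.3.]{5.3}, using the identification $G/Q_2 \cong \Gr \cong K/(K \cap Q_2)$ and $N_K(C_2^0) \subseteq K \cap Q_2$, then lets us choose the conjugating element to be trivial, so $G_x \leq Q_2$ on the nose.

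Since each of $Q_2$, $E_2^0$, $E_{\Lambda,2}$ contains the normal subgroup $E_2^0 \lhd Q_2$, we conclude that $Q_2 \cdot \Sigma_2 = \Sigma_2$ and that the induced $Q_2$-action factors through $\nu_2 : Q_2 \to \R^*$. The one-parameter subgroup $\{a_2^t\}$ therefore generates a real-analytic flow $\psi_{2,X}^t$ on each circle component $\Sigma_2^0$, and the involution $\sigma_2 \in C_2$ restricts to a real-analytic involution $\tau_2$ either preserving $\Sigma_2^0$ or swapping two circles---exactly the data of Construction 1-(2). The $G$-equivariant evaluation map
\[
\Phi_2 \colon G \times_{Q_2} \Sigma_2 \longrightarrow M, \qquad [(g,x)] \longmapsto g.x,
\]
then is the candidate diffeomorphism. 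Openness reduces to $T_x \cO_x \oplus T_x \Sigma_2 = T_x M$ at each $x \in \Sigma_2$: on a $\Gr$-orbit the $C_2^0$-isotropy representation on $T_x \Gr \cong \mathrm{Hom}(V,\R^n/V)$ has no nonzero fixed vector, and on a $(2n-3)$-dimensional type (5) orbit the $C_2^0$-fixed locus is exactly the one-dimensional $\R$-factor supplied by the suspension, so transversality holds. Injectivity comes from the Iwasawa decomposition $G = KAN$ precisely as in Lemma \hyperref[lem5.3.]{5.3}, since $N \subset E_2^0$ and $A/(A \cap E_2^0) \cong \{a_2^t\}$, so every class is represented by some $(k,x)$ with $k \in K$ and $x \in \Sigma_2^0$, and equality of images forces the difference to lie in $K \cap Q_2$.

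The main obstacle I anticipate is the first step: cleanly ruling out the competing orbit types at points of $\Sigma_2$, most delicately the flag variety $\mathcal{F}^n_{1,2}$ (whose stabilizer sits inside $Q_2$ but does \emph{not} contain $E_2^0$, which would break the factoring through $\nu_2$) and the exceptional five-dimensional $n=4$ orbit of Remark \hyperref[rmk3.4.]{3.4}. I expect both can be excluded by refining the compact-subgroup embedding analysis used in the proof of Theorem \hyperref[thm3.1.]{3.1}, combined with the observation that a $\mathcal{F}^n_{1,2}$ orbit cannot degenerate onto a $\Gr$-orbit while remaining invariant under the same $C_2^0$-circle.
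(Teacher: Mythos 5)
Your proposal follows essentially the same route as the paper's proof: the same reduction $G_x\le Q_2$ via Proposition \hyperref[prop5.2.]{5.2} and the $K$-normalizer argument with $G/Q_2\cong K/C_2$, the same factoring of the $Q_2$-action through $\nu_2$, and the same evaluation map $\Phi_2$ shown to be a diffeomorphism by transversality at $\Gr$- and $(2n-3)$-dimensional orbits together with the Iwasawa decomposition. The one place you go beyond the paper is in explicitly ruling out the competing stabilizer types at points of $\Sigma_2$ by comparing maximal compact subgroups --- a step the paper asserts with less detail --- and that exclusion is sound, except that for $\mathrm{O}(n-1)$ the non-embedding of $\mathrm{SO}(2)\times\mathrm{SO}(n-2)$ is not purely a dimension count and needs a short representation-theoretic argument.
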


\begin{proof}
    Since $M$ contains $\Gr$-orbits by assumption, consider the fixed point set $\Sigma_2$ of the restricted $C^0_2$-action, as defined in Proposition \hyperref[prop5.2.]{5.2}. For each $x\in\Sigma_2$, the point stabilizer $G_x$ contains $C^0_2$. Because $M$ contains an orbit conjugate to $\mathrm{Gr}(2,n)$, the stabilizer is conjugate in $G$ into the maximal parabolic subgroup $Q_2$ which stabilizes a $2$-plane in $\R^n$. Let $g\in G$ be such that $gG_xg^{-1}\leq Q_2$, and hence $gC^0_2g^{-1}\leq Q_2$. Note that the homogeneous spaces $G/Q_2$ and $K/C_2$ are $K$-equivariantly diffeomorphic up to finite cover. Then we have $$C^0_2\leq\mathrm{Stab}_G(g^{-1}Q_2)\cap K=\mathrm{Stab_K(g'C_2)}$$ for some $g'\in K$, where the stabilizers are taken with respect to the action by translation. Since $C^0_2\subset\mathrm{Stab}_k(g'C_2)$, it follows $g'\in N_K(C^0_2)=C_2$, the normalizer of $C^0_2$ in $K$. Hence $g\in Q_2$, and therefore $G_x\leq Q_2$ for all $x\in\Sigma_2$.
    
    By the orbit classification results in Theorem \hyperref[thm3.1.]{3.1}, the point stabilizer $G_x$ of $x\in\Sigma_2$ is one of $Q_2$, $Q^0_2$, or codimension one subgroups of $Q_2$ which are described in Section \hyperref[s4]{4}. Since all these subgroups contain $C^0_2$, it follows $Q_2.\Sigma_2=\Sigma_2$. Moreover, all codimension one subgroups of $Q_2$ act trivially in restriction to $\Sigma_2$. It follows that the $Q_2$-action on $\Sigma_2$ factors through the epimorphism $$\nu_2:Q_2\rightarrow\R^*$$ introduced in Section \hyperref[s4]{4}. Let $\Sigma^0_2$ be a connected component of $\Sigma_2$. The $\R^*_{>0}$ preserves $\Sigma^0_2$; this action is an analytic flow $\{\psi^t_{2,X}\}$, which corresponds to the restriction of $\{a^t_2\}$. Let $\sigma_2$ be an involution that leaves $\Sigma_2$ invariant. Let $\tau_2=\sigma_2|_{\Sigma_2}$. Depending on $\tau_2$, we set $\Sigma_2=\Sigma^0_2$ or $\Sigma^0_2\times\{-1,1\}$. Let $\mu_{2,X,\tau_2}$ denote the corresponding $Q_2$-action on $\Sigma_2$. For this $Q_2$-action, we define the $G$-equivariant map $$\Phi_2:G\times_{Q_2}\Sigma_2\rightarrow M ~~~[(g,x)]\mapsto g.x.$$
    
    Remains to show $\Phi_2$ is a diffeomorphism. Firstly, we verify surjectivity by showing it is open and closed. The image of $\Phi_2$ is closed because the fiber product is compact. To show $\Phi_2$ is open, we consider the orbit $G.x$ for all $x\in\Sigma_2$. If $G.x$ is $(2n-3)$-dimensional, it is open by dimension comparison. If $G.x$ is $(2n-4)$-dimensional, corresponding to $G_x=Q_2$ and $G.x=\Gr$, and $C^0_2$-fixed set in $G.x$ is $0$-dimensional. Thus $G.x$ is transverse to $\Sigma_2$ at $x$, and the differential map has full rank, which means $\Phi_2$ is open, so a surjective local diffeomorphism. We can verify injectivity in the same way as in the preceding proof using the Iwasawa decomposition, thus the map is a diffeomorphism.  
\end{proof}

\subsection{$G$-action on manifolds containing global fixed points}

When the $G$-action on a manifold admits global fixed points, the $G$-fixed point set has dimension $m-n$, by the Proposition \hyperref[prop3.5.]{3.5}. Since the $G$-fixed point set is sufficiently large, orbit types of higher dimension, such as $\Gr$, cannot occur. The following theorem describes all possible $G$-actions on manifolds with global fixed points.

\thmB*

\begin{proof}
    Suppose that $x_0 \in M$ is a $G$-fixed point. Since the manifold contains a $G$-fixed point, it contains $(m-n)$-dimensional $G$-fixed submanifolds by Proposition \hyperref[prop3.5.]{3.5}. Let $F^0$ be the connected component of $G$-fixed set that contains $x_0$. There exists a connected component $\Sigma^0$ of $C^0$-fixed point set $\Sigma$ introduced in Proposition \hyperref[prop5.1.]{5.1}, which contains $F^0$ as a submanifold. Thus, we have $x_0 \in F^0 \subseteq \Sigma^0 \subseteq \Sigma$. 

    As the previous proof of Lemma \hyperref[lem5.3.]{5.3}, $\Sigma$ is the $Q$-invariant set, which means $Q.\Sigma=\Sigma$. With respect to the isotropy representation of $G$ at $x_0$, there exists a line $l_0$ in a $Q$-invariant space tangent to $\Sigma^0$, which is not fixed by $G$. This isotropy representation is the standard representation on $\R^m$ as stated in Remark \hyperref[rmk3.6.]{3.6}. Recall that $\{a^t\}$ is the 1-parameter subgroup in the center of the Levi group $L\cong\GL$ in the Levi decomposition of $Q$. In a suitable parametrization, $\rho(a^t)$ has the eigenvalue $e^t$ on $l_0$. Let $\sigma \in C$ be an involution so that $\rho(\sigma)$ acts as $-Id$ on $l_0$. Both $\sigma$ and $\{a^t\}$ have no fixed points on $\Sigma^0\backslash F^0$ in a neighborhood of $x_0$. By Theorem \hyperref[thm3.1.]{3.1}, the stabilizers of these points in this neighborhood are $E^0$ and the corresponding orbits are $\R^n\backslash\{0\}$. Then an $n$-dimensional $G$-orbit fills a neighborhood of $x_0$ on $M\backslash F^0$. Finally, there is a neighborhood $U_0$ in $M$ at $x_0$ that is $G$-equivariantly homeomorphic to $\RO\times V_\alpha$, where $V_\alpha$ is an open neighborhood of $x_0$ in $F$. By applying Theorem \hyperref[thm2.3.]{2.3}, we obtain that the $G$-action on $U_0$ is real analytically equivalent to the standard representation $\rho$. 

    For any $G$-fixed points $x_0\in F$, there exists a local $G$-equivariant isomorphism $\Psi_\alpha$ from $\RO\times V_\alpha$ onto an open set in $M$, where $V_\alpha$ is an open neighborhood of $x_0$ in $F$. Since $F$ is a finite disjoint union of closed submanifolds in $M$, we can construct a finite open covering $\{V_\alpha\}$ of $F$ with a corresponding local isomorphism $\Psi_\alpha$ for each $V_\alpha$. 
    
    For each connected component $F_0$ of $F$, we can construct an open set $U=\bigcup \Psi_\alpha (\RO\times V_\alpha)$ in $M$ for a finite covering $\{V_\alpha\}$ of $F_0$. This open set $U$ contains $F_0$ and is $G$-equivariantly homeomorphic to $\RO\times F^0$.  

    \medskip
    
    Let $E$ be the intersection of $U$ and $\Sigma^0\backslash F$. Then $E$ forms a tubular neighborhood of $F^0$ within $\Sigma^0\backslash F$, and it is diffeomorphic to the normal bundle of $F^0$ in $\Sigma^0$ due to the existence of a real-analytic projection from $E$ to $F^0$. The normal bundle of $F^0$ consists of vectors that are normal to $F^0$ within $\Sigma^0$. Since the codimension of $F^0$ in $\Sigma^0$ is 1, the normal bundle is a real line bundle, and $E$ has two connected components separated by the zero section $F^0$. 

    Consider the holonomy of this line bundle arising from parallel transport of normal vectors along loops in $F^0$. The holonomy group is isomorphic to $\Z_2$, since after parallel transport, a normal vector may either remain in the same component or switch to the other. The corresponding holonomy representation defines a homomorphism $\pi_1(F^0)$ to $\Z_2$, whose kernel is a subgroup of index 2. 
    
    Next, we define a left principal $Q^0$-bundle $p$ from $E$ to $F^0$, satisfying $p
    (\Psi_\alpha(t,x))=x$ for $t\in\RO$. If the holonomy of this bundle is trivial, in other words, the monodromy representation $\pi_1(F^0)\rightarrow Q^0$ is trivial, then the bundle $p$ is globally trivial. Otherwise, 
    consider the associated holonomy representation of the normal line bundle to $F^0$ in $\Sigma^0$, which takes values in $\Z_2$, due to the codimension being one. The kernel of this representation corresponds to a double cover of $F^0$, yielding a principal $\Z_2$-bundle. Pulling back the bundle $p$ to this double cover trivializes it. Thus, we obtain a global real-analytic 
    $$\Psi : \R^n\times F^0 \to M,$$ which is a $G$-equivariant diffeomorphism onto an open set of $M$ satisfying $\Psi(0,x)=x$ for any $x\in F$. The same procedure can be applied to any connected component $F^0$ of $F$, allowing us to define the $G$-equivariant isomorphism $\Psi$ from $\R^n\times F$.

    \medskip
    
    Now, let $\Sigma^0_\beta$ be a connected component of $\Sigma^0\backslash F$. The number of such components is finite since the $G$-fixed set $F$ has only finitely many connected components. Each $\Sigma^0_\beta$ contains at least one codimension-one submanifold as a boundary arising from $F$. Since $\Sigma^0_\beta$ is connected and contains no $G$-fixed points, it is $Q^0$-invariant. 
    By the orbit classification in Theorem \hyperref[thm3.1.]{3.1} and the same reasoning as in the proof of Lemma \hyperref[lem5.3.]{5.3}, the stabilizers of points in $\Sigma^0_\beta$ are equal to either $Q^0$ or $E^0$. We define a map $$\Phi_\beta : G\times_{Q^0}\Sigma^0_\beta \rightarrow M$$ by $\Phi_\beta([g,x]) \rightarrow g.x$ for each $\Sigma^0_\beta$. As shown in the proof in Lemma \hyperref[lem5.3.]{5.3}, $\Phi_\beta$ is a local diffeomorphism and has an open image in $M$, which is diffeomorphic to $\Snone \times \Sigma^0_\beta$ under the $K$-equivariant diffeomorphism. 

    For each boundary component $F^0$ of $\Sigma^0_\beta$, define $I_\beta$ as the intersection $U\cap\Sigma^0_\beta$, where $U$ is the open neighborhood around $F^0$ in $M$ defined above. The closure of $I_\beta$ can be considered as a collar neighborhood of $\Sigma^0_\alpha$ which has the same structure as $[0, 1) \times F^0$. In other words, there exists a diffeomorphism $f$ from $[0, 1)\times F^0$ to a collar neighborhood $\overline{I_\beta}$ satisfying $f(0,x)=x$ for all $x \in F^0$. 

    The restriction of $\Phi_\beta$ to $G\times_{Q^0} I_\beta$ is a $G$-equivariant diffeomorphism onto $U\backslash F$. Under the $K$-equivariant identification with $\Snone\times I_\beta$, the fibers $\{p\}\times I_\beta$ are diffeomorphic to $[0,1)\times F^0$. Here, $[0,1)$ corresponds to the rays from the origin in $\R^n$ within the neighborhood $U$. With this identification, we can define a map from $G\times_{Q^0} \Sigma^0_\beta$ to $\RO \times F$ for every $\Sigma^0_\beta$ near their boundaries. 
    
    Finally, the disjoint union of the image of $\Psi$ and the images of $\Phi_\beta$ for all $\beta$, identified by the above identification map, forms an open and closed submanifold in $M$ with real-analytic $G$-action. Consequently, it is equal to $M$, as described in Construction 2 where $\{\psi^t_X\}$ coincides with $\{a^t\}$ when restricted to each $\Sigma^0_\alpha$.
\end{proof}

\section{Density/Non-density of structural stability}\label{s6}

Let $\X$ be the space of vector fields on a compact differentiable manifold $\Sigma$, equipped with the $C^r$-topology, $r\geq1$. A vector field $X\in\X$ is said to be $structurally$ $stable$ if there exists a neighborhood $U_X$ of $X$ in $\X$ such that every $Y\in U_X$ is topologically equivalent to $X$. Two vector fields $X$ and $Y$ in $\X$ are $ topologically$ $equivalent$ if there exists a homeomorphism $H:\Sigma \rightarrow\Sigma$ that maps the trajectories of $X$ onto those of $Y$. In other words, a vector field is structurally stable if small perturbations in the $C^r$-topology do not change the topological structure of its solution curves. 

\medskip

Peixoto in \cite{P} provided the density  structural stability on $2$-dimensional manifolds. Structural stability means all finitely many fixed points are hyperbolic. In dimension one, the manifold $\Sigma$ is diffeomorphic to a circle $\Sone$ or a finite union of circles. This case is treated in \cite{P} as a trivial special case. 

\begin{thm} [{\cite{P}}]\label{thm6.1.}
    The set of all structurally stable vector fields is open and dense in $\X$, the space of $C^1$-vector fields with the $C^1$-topology on a compact manifold $\Sigma$ of dimension $n\leq2$.
\end{thm}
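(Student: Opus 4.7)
The plan is to split into the trivial one-dimensional case and the genuine two-dimensional case, and in each case apply classical dynamical systems tools. When $\dim \Sigma = 1$, the manifold is a finite disjoint union of copies of $\Sone$ and the vector field is just a smooth real-valued function in local coordinates; structural stability reduces to having only hyperbolic zeros. Sard's theorem applied to the map $X\colon \Sone \to \R$ allows one to perturb $X$ in a $C^1$-small way so that $0$ becomes a regular value, yielding density. Openness is clear since having finitely many hyperbolic zeros is a $C^1$-open condition, and topological equivalence between two such vector fields is obtained by matching sources and sinks along the circle.

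For $\dim \Sigma = 2$, I would take as the main input Peixoto's structural-stability characterization: $X \in \X$ is structurally stable if and only if (i) $X$ has only finitely many singularities, all hyperbolic; (ii) $X$ has only finitely many closed orbits, all hyperbolic; (iii) no trajectory joins two saddle points; and (iv) the nonwandering set consists only of singularities and closed orbits. Granting this characterization, density would follow from a sequence of small $C^1$-perturbations: first, apply a local Sard-type argument near each zero of $X$ to render all singularities hyperbolic (finitely many by compactness); second, use a Poincar\'e first-return map in a flow-box around each closed orbit and perturb to make it hyperbolic; third, destroy any saddle connection by tilting $X$ transversally along a cross-section through the connecting separatrix, which is possible because the saddle-connection condition has infinite codimension.

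The hardest step will be controlling the nonwandering set, that is, verifying property (iv). In planar or annular regions Poincar\'e--Bendixson already forces $\omega$-limit sets to be singular points, closed orbits, or graphs of saddle connections, but on closed surfaces of higher genus one needs Pugh's $C^1$-closing lemma to replace any nontrivial recurrent orbit by a periodic one under an arbitrarily small $C^1$-perturbation. After closing all recurrences one reapplies the previous perturbations to restore hyperbolicity; the fact that the combinatorial complexity of the resulting Morse--Smale picture is bounded in terms of $\chi(\Sigma)$ ensures that this iterative procedure terminates. This is precisely where the low-dimensional assumption $n \leq 2$ is essential, since the analogous density statement is known to fail for $n \geq 3$.

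Finally, openness of the structurally stable set is essentially built into Peixoto's characterization: each of (i)--(iv) is a $C^1$-open condition, and for two nearby Morse--Smale vector fields one constructs a conjugating homeomorphism by combinatorially matching stable and unstable separatrices via the hyperbolic invariant manifold theorem. Combining openness with the density argument above completes the proof.
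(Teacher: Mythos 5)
The paper does not prove this statement: Theorem 6.1 is imported verbatim from Peixoto \cite{P} as background for Section 6, so there is no internal proof to compare yours against. Judged on its own, your outline follows the standard route (characterize structural stability by the Morse--Smale conditions, then establish density of Morse--Smale fields by successive small perturbations), and for the $C^1$ statement as written it is essentially the right architecture; in particular, invoking Pugh's closing lemma to kill nontrivial recurrence is the correct move for arbitrary compact surfaces, including non-orientable ones, even though it is anachronistic relative to Peixoto's original argument (which handled recurrence directly on orientable surfaces). Your dimension-one paragraph is fine, provided you note that a nowhere-vanishing field on $\Sone$ is also structurally stable, so the ``hyperbolic zeros'' criterion should be read as vacuously satisfied there.

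Two points need repair. First, the assertion that the saddle-connection condition ``has infinite codimension'' is wrong: a heteroclinic saddle connection on a surface is a codimension-one phenomenon (it is the generic bifurcation that separates Morse--Smale classes), and all you need, and all that is true, is that it is non-generic and can be destroyed by an arbitrarily small perturbation in a flow box along the separatrix. Second, the step where the real work lives --- showing that the iterated scheme ``close a recurrent orbit, re-hyperbolize, repeat'' terminates without recreating recurrence --- is only asserted. Bounding the number of hyperbolic singularities and closed orbits via $\chi(\Sigma)$ does not by itself control the nonwandering set after each perturbation; this is precisely the delicate part of Peixoto's proof, and your sketch would need to either reproduce his filtration argument or quote it. Finally, be aware that your argument is genuinely $C^1$: since you lean on the $C^1$-closing lemma, it does not yield density in the $C^r$ topology for $r\geq 2$ on non-orientable surfaces of higher genus, which remains open; this is harmless here because the theorem is stated only for the $C^1$ topology, and Corollary 1.3(1) only claims $C^r$-density in the one-dimensional cases $\Sigma\cong\Sone$ where the issue does not arise.
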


Smale in \cite{Smale} gave a negative answer to the question of the density of structurally stable systems in dimension $n=4$, and this result extends straightforwardly to all higher dimensions $n\geq4$. In dimension $n=3$, the non-density of structurally stable systems was provided by Williams in \cite{W}.

\begin{thm} [{\cite{Smale},\cite[Theorem B]{W}}]\label{thm6.2.}
    For $n\geq3$, there exists a compact differentiable $n$-dimensional manifold $\Sigma$ and an open $U\in\X$ in the space of $C^r$-vector fields with the $C^r$-topology on $\Sigma$, $r\geq1$, such that there is no $X\in U$ structurally stable.
\end{thm}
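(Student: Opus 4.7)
The plan is to construct, for each $n \geq 3$, a closed $n$-manifold $\Sigma$ and a $C^r$-open set $U \subset \X$ such that every vector field in $U$ carries a continuously varying conjugacy invariant, so no member of $U$ can be structurally stable. For $n \geq 4$ I would reduce to a statement about diffeomorphisms via suspension: a $C^r$-diffeomorphism $g$ of a closed $(n-1)$-manifold $N$ yields a vector field $X_g$ on the mapping torus $N_g = (N \times [0,1])/((x,1) \sim (g(x),0))$, and $X_g$ is structurally stable if and only if $g$ is. It therefore suffices to exhibit a $C^r$-open set of non-structurally-stable diffeomorphisms in dimension $n - 1 \geq 3$.

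Following Smale, I would take $g_0$ to have a hyperbolic invariant set $\Lambda$ containing two saddle periodic orbits $p$ and $q$ linked by a transverse heteroclinic configuration $W^u(p) \cap W^s(q)$. The stable-manifold theorem together with the $\lambda$-lemma make both $\Lambda$ and its internal heteroclinic link persistent on a $C^r$-neighborhood $U_0$ of $g_0$. On $U_0$ a Palis-type ratio built from the logarithms of the expanding and contracting eigenvalues of $Dg^{\mathrm{per}(p)}(p)$ and $Dg^{\mathrm{per}(q)}(q)$ depends continuously and nontrivially on $g$ while being preserved by any topological conjugacy, so no $g \in U_0$ is structurally stable; passing to the suspension yields the corresponding open set $U \subset \X$ on $\Sigma = N_{g_0}$. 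For $n = 3$ the suspension reduction would lead to surface diffeomorphisms, where Peixoto's Theorem \hyperref[thm6.1.]{6.1} gives density rather than non-density, so a direct construction on a 3-manifold is required. Following Williams, I would build a vector field supporting a geometric Lorenz-type attractor whose Poincar\'e return map on a 2-dimensional cross-section is singular-hyperbolic, and whose kneading invariants on the two branches of the return map supply a continuous modulus that varies nontrivially across a $C^r$-open neighborhood of vector fields.

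The main obstacle is the simultaneous verification of two things on a genuinely open set $U$: (a) persistence of the full dynamical configuration --- hyperbolic set plus heteroclinic link in Smale's case, or the geometric Lorenz template in Williams' case --- under $C^r$-perturbation, which rests on classical hyperbolic theory; and (b) proving that the chosen modulus really is a topological-conjugacy invariant and genuinely varies across $U$. Step (b) is the delicate one, as one must rule out reparametrizations of time or reshufflings of orbits that could absorb the variation of the modulus; this is where Palis and de Melo's analysis of heteroclinic connections, or Williams' combinatorial classification of template dynamics, is essential.
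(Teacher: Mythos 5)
The paper offers no proof of this statement---Theorem 6.2 is imported verbatim from Smale and from Williams---so the only question is whether your sketch would actually establish it, and for $n\geq 4$ it would not. Your global strategy (a $C^r$-open set carrying a continuous, non-locally-constant invariant of topological equivalence) is the right one, and the suspension reduction is legitimate, since a global cross-section persists under $C^r$-small perturbation and the return map varies continuously. The fatal step is the choice of invariant. For a \emph{transverse} heteroclinic configuration $W^u(p)\pitchfork W^s(q)$ between hyperbolic periodic orbits, no ratio of logarithms of eigenvalues of $Dg^{\mathrm{per}(p)}(p)$ and $Dg^{\mathrm{per}(q)}(q)$ is a topological conjugacy invariant: if it were, no Axiom~A diffeomorphism with the strong transversality condition and a heteroclinic orbit could be structurally stable, contradicting the structural stability theorem of Robbin and Robinson. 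The Palis modulus you are invoking is attached to \emph{non-transverse} (tangential or quasi-transversal) heteroclinic intersections. Switching to those destroys openness: an isolated tangency is removed by an arbitrarily small perturbation, and the only general device for making tangencies persistent is the Newhouse mechanism, which is intrinsically $C^2$ and therefore cannot yield the $r=1$ case the theorem asserts (the paper quotes Newhouse separately, as Theorem 6.3, precisely for the $r\geq 2$ statements). Smale's actual example is $C^1$-robust because the unremovable failure of transversality occurs between the invariant \emph{laminations} of two nontrivial basic sets, and the continuously varying equivalence invariant is the relative position of one lamination within the Cantor transverse structure of the other---not an eigenvalue ratio.

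For $n=3$ your geometric-Lorenz route is viable in principle (the kneading data of the one-dimensional quotient of the return map is a continuously varying equivalence invariant on a $C^1$-open set of flows), but it is not what the cited Williams paper does---that paper obtains the three-dimensional case from Smale's DA diffeomorphisms, again via the lamination mechanism---and you explicitly defer the two points that carry all the difficulty: robustness of the full configuration on a genuinely open set, and invariance of the chosen modulus under orbit equivalences that may reparametrize time. As written, the proposal closes neither gap, and the $n\geq 4$ half rests on an invariance claim that is false for transverse connections.
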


The next theorem implies the non-density of structurally stable systems in the space of $C^r$-diffeomorphisms, $r\geq2$, on a compact $n$-manifold with $n\geq2$.

\begin{thm}[Newhouse phenomenon {\cite{N}}] \label{thm6.3.} For any compact manifold of dimension greater than one, there exist open sets in the $C^r$-topology, for $r\geq2$, of non-hyperbolic diffeomorphisms. 
\end{thm}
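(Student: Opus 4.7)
The plan is to outline Newhouse's original argument, which produces a $C^r$-open set of diffeomorphisms ($r \geq 2$) exhibiting robust homoclinic tangencies and consequently non-hyperbolic behavior that persists under perturbation. Once such an open set is produced, non-hyperbolicity (failure of Axiom A plus transversality) rules out structural stability, giving what the theorem asserts.

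First I would reduce to the two-dimensional setting. Given a compact manifold $N$ of dimension at least two, embed a disk $D^2 \subset N$, perform the construction inside the disk, and extend by the identity outside a slightly larger disk using a bump function. In dimensions greater than two, one takes local products of the surface construction with strongly contracting (or expanding) linear maps in the transverse directions, so that every invariant set produced on the surface factor retains its hyperbolic splitting in the ambient manifold.

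On the surface, I would begin with a diffeomorphism $f_0$ carrying a hyperbolic horseshoe $\Lambda$ together with a homoclinic tangency between the stable and unstable manifolds of some periodic saddle $p \in \Lambda$. The crucial point is to choose the linearized eigenvalues used to build $\Lambda$ so that the two local Cantor cross-sections have thicknesses $\tau^s$ and $\tau^u$ satisfying $\tau^s \cdot \tau^u > 1$. Here thickness is Newhouse's arithmetic invariant of a Cantor subset of the line, measuring the ratio of bridge lengths to gap lengths.

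The decisive analytic input is Newhouse's gap lemma: two Cantor sets in $\mathbb{R}$, neither contained in a gap of the other, whose thicknesses multiply to more than one, must intersect. Combined with the $C^r$-continuous dependence of the thicknesses on the diffeomorphism, this forces every $g$ in a $C^r$-neighborhood $U$ of $f_0$ to still present a homoclinic tangency at the relevant locus. Since a tangency between invariant manifolds of a basic set prevents $g$ from being Axiom A with strong transversality, every $g \in U$ fails to be hyperbolic, and in fact these open sets even contain residual subsets of diffeomorphisms with infinitely many sinks. The main obstacle, and the reason for the hypothesis $r \geq 2$, is establishing the $C^r$-continuity of the thickness invariant under perturbation: the requisite bounded distortion estimates for the nonlinear projections along the stable and unstable foliations rely on Hölder control of derivatives, which is unavailable in the merely $C^1$ category and is precisely why no $C^1$ analogue of the gap lemma exists.
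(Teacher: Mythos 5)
The paper does not prove this statement at all --- it is quoted directly from Newhouse \cite{N} --- so there is no internal argument to compare against; your proposal is instead a summary of the cited source's own proof. As such it is accurate: the thick horseshoe with a homoclinic tangency, the thickness condition $\tau^s\cdot\tau^u>1$, the gap lemma, the $C^2$-continuity of thickness via bounded distortion, the reduction from surfaces to higher-dimensional manifolds by normally hyperbolic products, and the correct diagnosis of why $r\geq2$ is indispensable are all the right ingredients in the right order. Be aware, though, that this is an outline rather than a proof: the two decisive technical inputs (the gap lemma and the $C^r$-dependence of thickness on the diffeomorphism) are invoked without justification; the persistent tangency for a perturbed $g$ is between the stable and unstable laminations of the \emph{continuation} of the basic set, generally at a different periodic orbit and location, not ``at the relevant locus''; and the stronger conclusion about residual sets of diffeomorphisms with infinitely many sinks additionally requires the saddle to be dissipative, a hypothesis you did not impose. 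None of these affects the conclusion actually used in Section 6, namely that such open sets consist of non--Axiom A, hence non--structurally stable, diffeomorphisms.
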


Since structurally stable systems satisfy Axiom A (which requires that all nonwandering points have hyperbolic behavior) and the Newhouse phenomenon does not meet the Axiom A condition, they cannot be structurally stable.

\medskip

By Theorem \hyperref[thm1.1.]{1.1}, any smooth $G$-action on a manifold $M$ without fixed points is determined by a smooth vector field $X$ on $\Sigma$ or $\Sigma_2$ and an involution $\tau$ of $\Sigma$ or $\Sigma_2$ commuting with $X$. Consequently, the density of structurally stable vector fields on $\Sigma$ or $\Sigma_2$ implies the density of structurally stable $G$-actions on the manifold $M$. 

\begin{proof}[Proof of Corollary \ref{cor1.3.}]
    By Theorem \hyperref[thm1.1.]{1.1}, the manifold $M$ is obtained as a suspension space of actions of maximal parabolic subgroups on a submanifold $\Sigma$ or $\Sigma_2$ except for the transitive action cases which cannot be deformed. This action factors through an $\R^*$-action on $\Sigma$ or $\Sigma_2$ generated by flow, which means that a smooth vector field on $\Sigma$ or $\Sigma_2$ determines a smooth $G$-action on the manifold $M$. If the compact manifold $\Sigma$ has dimension $n\leq2$, then the set of structurally stable $C^1$-vector fields is dense by Theorem \hyperref[thm6.1.]{6.1}. Since $\Sigma_2$ is a finite union of circles, case (2) of Theorem \hyperref[thm1.1.]{1.1} also has a density of structural stability. On the other hand, if $\mathrm{dim}(\Sigma)\geq2$, then the space of structurally stable $C^r$-diffeomorphisms, $r\geq2$, is not dense by Theorem \hyperref[thm6.3.]{6.3}. Moreover, if $\mathrm{dim}(\Sigma)>2$, then structurally stable vector fields are not dense by Theorem \hyperref[thm6.2.]{6.2}.
\end{proof}

%--------------------------------------------
\bibliographystyle{amsplain}

\end{document}